\documentclass[draft,12pt]{article}

\usepackage{amsfonts,amsmath,amsthm,amscd,amssymb,latexsym,cite,verbatim,texdraw,floatflt,caption2,pb-diagram}

%   \usepackage[T2A]{fontenc} % for russian and ukrainian language
 %   \usepackage[cp1251]{inputenc}
 %   \usepackage[ukrainian]{babel}

%%%%%%%%%%%%%%%%%%%%%%%%%%%%%%%%%%%%%%%%%%%%%%%%%%%%%%%%%%%%%%%%%%%%%%%%
\usepackage[mathscr]{eucal}
%%%%%%%%%%%%%%%%%%%%%%%%%%%%%%%%%%%%%%%%%%%%%%%%%%%%%%%%%%%%%%%%%%%%%%%%

%Change theorem environments according to your needs...

\newtheorem{theorem}{Theorem}[section]
\newtheorem{lemma}{Lemma}[section]

\newtheorem{corollary}{Corollary}[section]
\theoremstyle{definition}

\newcommand{\keywords}{\textbf{Keywords: } }

\newcommand{\subjclass}{\textbf{Mathematics Subject Classification (2010):} }
\renewcommand{\abstract}{\textbf{Abstract.} }

\numberwithin{equation}{section}

\setlength{\textwidth}{165mm}
\setlength{\textheight}{232mm}
\setlength{\oddsidemargin}{5mm}
\setlength{\evensidemargin}{5mm}
\topmargin=-15mm

\sloppy

\begin{document}

%%%%%%%%%%%%%%%%%%%%%%%%%%%%%%%%%%%%%%%%%%%%%%%%%%%%%%%%%%%%%%%%%%%%%%%%%%%%%
%%%%%%%%%%%%%%%%%%%%%%%%%%%%%%%%%%%%%%%%%%%%%(\ref {3})%%%%%%%%%%%%%%%%%%%%%%
%%%%%%%%%%%%%%%%%%%%%%%%%%%%%%%%%%%%%%%%%%%%%%%%%%%%%%%%%%%%%%%%%%%%%%%%%%%%%

\title{Direct and inverse approximation theorems  \\
 in the  Besicovitch-Museilak-Orlicz spaces \\of almost periodic functions }

\author{  Stanislav Chaichenko, Andrii Shidlich, Tetiana Shulyk}

%\shorttitle{Direct and inverse approximation theorems...}

%\shortauthor{ Stanislav Chaichenko, Andrii Shidlich, Tetiana Shulyk}

\date{}

\maketitle

%%%%%%%%%%%%%%%%%%%%%%%%%%%%%%%%%%%%%%%%%%%%%%%%%%%%%%%%%%%%%%%%%%%%%%%%%%%%%%%%%%%%%%%%%%%%

\abstract{In  terms of the best approximations of functions and  generalized moduli of smoothness, direct and inverse approximation  theorems are proved  for  Besicovitch almost periodic functions whose Fourier exponent sequences have a single limit point in infinity and their Orlicz norms are finite.  Special attention is paid to the study of cases when the constants in these theorems are unimprovable.}

\keywords{direct approximation theorem, inverse approximation theorem,
Jackson type inequality, generalized module of smoothness.}

\subjclass{     42A75 \and 42A32 \and   41A17 \and  41A25 \and  26A15}

%%%%%%%%%%%%%%%%%%%%%%%%%%%%%%%%%%%%%%%%%%%%%%%%%%%%%%%%%%%%%%%%%%%%%%%%%%%%%%%%%%%%%%%%%%%%%%%%%%%%%%%%%%%%%%%%%%%%%%%%%%%%%%%%

%%%%%%%%%%%%%%%%%%%%%%%%%%%%%%%%%%%%%%%%%%%%%%%%%%%%%%%%%%%%%%%%%%%%%%%%%%%%%%%%%%%%%%%%%%%%%%%%%%%%%%%%%%%%%%%%%%%%%%%%%%%%%%%%

\section{Introduction}

The establishment of connections between the difference and differential properties of the function being approximated and the value of the error of its approximation by some methods was originated in the well-known works of Jackson (1911) and Bernstein (1912), in which the first direct and inverse approximation theorems were obtained. Subsequently, similar studies were carried out by many authors for various functional classes and for various approximating aggregates, and their results constitute the classics of modern approximation theory. Moreover, the exact results (in particular, in the sense of unimprovable constants) deserve special attention. A fairly complete description of the results on obtaining direct and inverse approximation theorems is contained in the monographs \cite{Butzer_Nessel_M1971, A_Timan_M1960, Stepanets_M2005, M_Timan_M2009}, etc.

In spaces of almost periodic functions, direct approximation theorems were established in the papers
\cite{Bredikhina_1968, Prytula_1972, Prytula_Yatsymirskyi_1983, Babenko_Savela_2012, Serdyuk-Shidlich_2022}, etc.
 In particular, Prytula \cite{Prytula_1972} obtained direct approximation theorem  for  Besicovitch almost periodic functions
 of the order $2$ ($B_2$-a.p. functions) in terms of the best approximations of functions and  their moduli of continuity.
 In \cite{Prytula_Yatsymirskyi_1983} and \cite{Babenko_Savela_2012},  such theorems were obtained, respectively, with moduli
  of smoothness  of $B_2$-a.p. functions of arbitrary positive integer order and with generalized moduli of smoothness. In
  \cite{Serdyuk-Shidlich_2022},  direct and inverse approximation theorems  were obtained in the Besicovitch-Stepanets spaces
   $B{\mathcal S}^{p}$. The main goal of this article is to obtain such theorems in  the Besicovitch-Museilak-Orlicz spaces
   $B{\mathcal S}_{\bf M}$.  These spaces are natural generalizations of the all  spaces mentioned above, and the results
   obtained can be viewed as an extension of these results % and in particular, the results of the paper \cite{Serdyuk-Shidlich_2022},
   to the spaces $B{\mathcal S}_{\bf M}$.

\section{Preliminaries}

\subsection{Definition of the spaces $B{\mathcal S}_{\bf M}$}

Let  $ {B}^s$,  $ 1\le s<\infty$, be the space of all functions  Lebesgue summable with the $s$th degrees in each finite interval of the real axis, in which the distance is defined by the equality
\[
    D_{_{\scriptstyle  B^s}}(f,g)=\Big(\mathop{\overline{\lim}}
    \limits_{T\to \infty}\frac 1{2T}\int_{-T}^T  |f(x)-g(x)|^s {\mathrm d}x\Big)^{1/s}.
\]
Further, let ${\mathscr T}$ be the set of all trigonometric sums of the form  $\tau_N(x)=\sum_{k=1}^N a_k {\mathrm e}^{{\mathrm i} \lambda_kx}$, $N\in {\mathbb N}$, where $\lambda_k$ and $a_k$ are arbitrary real and complex numbers ($\lambda_k\in {\mathbb R}$, $a_k\in {\mathbb C}$).

An arbitrary function $f$ is called a Besicovitch almost periodic function of order $s$ (or $B^s$-a.p. function) and is denoted by $f\in B^s$-a.p.  \cite[Ch.~5, \S10]{Levitan_M1953}, \cite[Ch.~2, \S7]{Besicovitch_M1955}, if there exists a sequence of trigonometric sums
$\tau_1, \tau_2, \ldots$ from the set  ${\mathscr T}$ such that
\[
    \lim_{N\to \infty} D_{_{\scriptstyle  B^s}}(f,\tau_N)=0.
\]

If $s_1\ge s_2\ge 1$, then (see, for example, \cite{Bredikhina_1968,  Bredikhina_1984}) $B^{s_1}$-a.p.$\subset B^{s_2}$-a.p.$\subset B$-a.p.,
where  $B$-a.p.$:=B^1$-a.p.   For any $B$-a.p. function $f$, there exists the average value
\[
    A\{f\}:=\lim\limits_{T\to \infty} \frac {1}{ T} \int_0^T f(x){\mathrm d}x.
\]
The value of the function $A\{f(\cdot) {\mathrm e}^{-{\mathrm i}\lambda \cdot} \}$, $\lambda\in {\mathbb R}$, can be nonzero at most on a countable set.  As a result of numbering the values of this set in an arbitrary order, we obtain a set ${\mathscr S}(f)=\{\lambda_k\}_{k\in {\mathbb N}}$  of Fourier exponents, which is called the spectrum of the function $f$.
The numbers  $A_{\lambda_k}=A_{\lambda_k}(f)=A\{f(\cdot) {\mathrm e}^{-{\mathrm i}\lambda_k \cdot} \}$ are called the Fourier coefficients of the function  $f$. To each function $f\in B$-a.p. with spectrum ${\mathscr S}(f)$  there corresponds a
Fourier series of the form
 $
 \sum_k A_{\lambda_k} {\mathrm e}^{ {\mathrm i}\lambda_k x}.
$
If, in addition, $f\in B_2$-a.p., then the Parseval equality holds (see, for example, \cite[Ch.~2, \S9]{Besicovitch_M1955})
\[
    A\{|f|^2\}=\sum_{k\in {\mathbb N}} |A_{\lambda_k}|^2.
\]

Further, we will consider only those almost periodic functions from the spaces $B{\mathcal S}^p$, the sequences of Fourier exponents of which have a single limit point at infinity. For such functions $f$,  the Fourier series are written in
the symmetric form:
\begin{equation}\label{Fourier_Series}
    S[f](x)=\sum _{k \in {\mathbb Z}}
    A_{k} {\mathrm e}^{ {\mathrm i}\lambda_k x},\quad  \mbox{\rm  where }\
    A_{k}=A_{k}(f)=A\{f(\cdot) {\mathrm e}^{-{\mathrm i}\lambda_k \cdot}\},
\end{equation}
$\lambda_0:=0$, $\lambda_{-k}=-\lambda_k$,  $|A_k|+|A_{-k}|>0$, $\lambda_{k+1}>\lambda_k>0$ for $k>0$.

%%%%%%%%%%%%%%%%%%%%%%%%%%%%%%%%%%%%%%%%%%%%%%%%%%%%%%%%%%%%%%%%%%%%%%%%%%%%%%%%%%%%%%%%%%%%%%%%%%%%%%%%%%%%%

Let ${\bf M}=\{M_k(t)\}_{k\in {\mathbb Z}}$, $t\ge 0$, be a sequence of Orlicz functions. In other words, for every $k\in {\mathbb Z}$, the
function $M_k(t)$ is a nondecreasing convex function for which $M_k(0)=0$ and $M_k(t)\to \infty$ as $t\to \infty$.
Let
 ${\bf {M^*}}=\{{M^*_k}(v)\}_{k\in {\mathbb Z}}$ be the sequence of  functions defined by the relations
 \[
      {M^*_k}(v):=\sup\{uv-M_k(u): ~u\ge 0\}, \quad k \in \mathbb{Z}.
 \]
Consider the set $\Gamma=\Gamma({\bf {M^*}})$ of
sequences of positive numbers $\gamma=\{\gamma_k\}_{k\in \mathbb{Z}}$
such that  $\sum_{k\in \mathbb{Z}}{M^*_k}(\gamma_k){\le} 1$.
The modular space (or  Musilak-Orlicz  space) $B{\mathcal S}_{\bf M}$  is the space of all functions $f$ ($f\in B$-a.p.)   such that
the following quantity (which is also called the Orlicz norm of $f$) is finite:
\begin{equation} \label{def-Orlicz-norm}
    \|f\|_{_{\scriptstyle  {\bf M}}}:=    \| \{A_k \}_{k\in {\mathbb Z}}
    \|_{_{\scriptstyle l_{\bf M}({\mathbb Z})}}
    := \sup \Big\{ \sum\limits_{k \in \mathbb{Z}}
    \gamma_k|A_k(f) |: \quad  \gamma\in \Gamma({\bf {M^*}})\Big\}.
\end{equation}
By definition, $B$-a.p. functions are considered identical in $B{\mathcal S}_{\bf M}$ if they have the same Fourier series.

%%%%%%%%%%%%%%%%%%%%%%%%%%%%%%%%%%%%%%%%%%%%%%%%%%%%%%%%%%%%%%%%%%%%%%%%%%%%%%%%%%%%%%%%%%%%%%%%%%%%%%%%%%%%%%%%%%%%%%%%%%
The spaces $B{\mathcal S}_{\bf M}$ defined in this way are Banach spaces.
Functional spaces of this type  have been studied by mathematicians since the 1940s (see, for example, the monographs \cite{Lindenstrauss-1977}, \cite{Musielak-1983}, \cite{Rao_Ren_2002}). In particular, the subspaces ${\mathcal S}_{\bf M}$ of all $2\pi$-periodic functions from $B{\mathcal S}_{\bf M}$ were considered in
\cite{Abdullayev_Chaichenko_Shidlich_2021, Abdullayev_Chaichenko_Shidlich_2021_RMJ}. If all the functions $M_k$ are identical (namely, $M_k(t)\equiv M(t)$,  $k\in {\mathbb Z}$), the spaces ${\mathcal S}_{\bf M}$  coincide with the ordinary Orlicz type spaces ${\mathcal S}_{M}$  \cite{Chaichenko_Shidlich_Abdullayev_2019}.
If $M_k(t)=\mu_k t^{p_k}$,  $p_k\ge 1$, $\mu_k\ge 0 $, then  ${\mathcal S}_{\bf M}$  coincide with the weighted spaces
${\mathcal S}_{_{\scriptstyle  \mathbf p,\,\mu}}$ with variable exponents \cite{Abdullayev_Chaichenko_Imash_kyzy_Shidlich_2020}.

 If all  functions $M_k(u)=u^p\Big(p^{-1/p}q^{-1/p'}\Big)^p$, $p>1$, $1/p+1/p'=1$, then  $B{\mathcal S}_{\bf M}$ are the Besicovitch-Stepanets spaces $B{\mathcal S}^{p}$ \cite{Serdyuk-Shidlich_2022} of functions  $f\in B$-a.p. with the norm
 \begin{equation}\label{norm_Sp}
 \|f\|_{_{\scriptstyle  {\bf M}}}=
 \|f\|_{_{\scriptstyle  B{\mathcal S}^p}} =\|\{A_{\lambda_k}(f)\}_{k\in {\mathbb N}}\|_{_{\scriptstyle  l_p({\mathbb N})}} =
 \Big(\sum_{k\in {\mathbb N}}|A_{\lambda_k}(f)|^p\Big)^{1/p}.
\end{equation}

The subspaces of all $2\pi$-periodic Lebesgue summable functions from  $B{\mathcal S}^{p}$  coincide with the well-known spaces  ${\mathcal S}^{p}$ (see, for example, \cite[Ch. XI]{Stepanets_M2005}).  For $p=2$, the sets $B{\mathcal S}^p=B{\mathcal S}^2$ coincide with the sets of $B_2$-a.p. functions and the spaces  ${\mathcal S}^{p}$  with the ordinary Lebesgue spases
of $2\pi$-periodic square-summable
functions, i.e., ${\mathcal S}^{2}=L_2.$

%%%%%%%%%%%%%%%%%%%%%%%%%%%%%%%%%%%%%%%%%%%%%%%%%%%%%%%%%%%%%%%%%%%%%%%%%%%%%%%%%%%%%%%%%%%%%%%%%%%%%%%%%%%%%

By $G_{\lambda_n}$  we denote the set of all $B$-a.p. functions whose Fourier exponents belong to the interval
$(-\lambda_n,\lambda_n)$  and define the value of the best approximation of $f\in B{\mathcal S}_{\bf M}$ by the equality
\begin{equation}\label{Best_Approximation_Deff}
    E_{\lambda_n}(f)_{_{\scriptstyle p}} =
    E_{\lambda_n}(f)_{_{\scriptstyle  B{\mathcal S}^p}}  =
    \inf\limits_{g\in G_{\lambda_n}}  \|f-g \|_{_{\scriptstyle p}}.
\end{equation}

%%%%%%%%%%%%%%%%%%%%%%%%%%%%%%%%%%%%%%%%%%%%%%%%%%%%%%%%%%%%%%%%%%%%%%%%%%%%%%%%%%%%%%%%%%%%%%%%%%%%%%%%%%%%

\subsection{Generalized moduli of smoothness}

%%%%%%%%%%%%%%%%%%%%%%%%%%%%%%%%%%%%%%%%%%%%%%%%%%%%%%%%%%%%%%%%%%%%%%%%%%%%%%%%%%%%%%%%%%%%%%%%%%%%%%%%%%%%
%%%%%%%%%%%%%%%%%%%%%%%%%%%%%%%%%%%%%%%%%%%%%%%%%%%%%%%%%%%%%%%%%%%%%%%%%%%%%%%%%%%%%%%%%%%%%%%%%%%%%%%%%%%%

Let $\Phi$ be the set  of all continuous bounded nonnegative pair functions
$\varphi(t)$ such that $\varphi(0)=0$ and the Lebesgue measure of the set $\{t\in {\mathbb R}:\,\varphi(t)=0\}$
is equal to zero. For an arbitrary fixed $\varphi\in \Phi$, consider the generalized modulus of smoothness of a
function $f \in B{\mathcal S}_{\bf M}$
%%%%%%%%%%%%%%%%%%%%%%%%%%%%%%%%%%%%%%%%%%%%%%%%%%%%%%%%%%%%%%%%%%%%%%%%%%%%%%%%%%%%%%%%%%%%%%%%%%%%%%%%%%%%%%%%%%%%%%%%%%%%%%%%%
\begin{equation}\label{general_modulus}
    \omega_\varphi(f,\delta)_{_{\scriptstyle  {\bf M}}}:= \sup\limits_{|h|\le \delta} \sup \Big\{ \sum\limits_{k \in \mathbb{Z}}
    \gamma_k \varphi (\lambda_k h)|A_k(f) |:~  \gamma\in \Gamma \Big\},  \quad  \delta\ge 0.
\end{equation}

Consider the connection between the modulus (\ref{general_modulus}) and some well-known moduli of smoothness.
Let $\Theta=\{\theta_j\}_{j=0}^m$ be a nonzero  collection of complex numbers such that $\sum_{j=0}^m \theta_j=0$. We
associate the collection $\Theta$ with the difference operator
$
    \Delta_h^{\Theta}(f)=\Delta_h^{\Theta}(f,t)=\sum_{j=0}^m \theta_j f(t-jh)
$
and the modulus of smoothness
 \[
    \omega_{\Theta}(f,\delta)_{_{\scriptstyle {\bf M}}} :=
    \sup\limits_{|h|\le \delta}\|\Delta_h^{\Theta} (f)\|_{_{\scriptstyle {\bf M}}}.
 \]
Note that the collection $\Theta(m)=\Big\{\theta_j=(-1)^j {m \choose j}, \ j=0,1,\ldots, m\Big\}$, $m\in {\mathbb N}$, corresponds to the classical modulus of smoothness of order  $m$, i.e.,
$$
    \omega_{\Theta(m)}(f,\delta)_{_{\scriptstyle {\bf M}}}=
    \omega_m(f,\delta)_{_{\scriptstyle {\bf M}}}.
$$

For any  ${k}\in {\mathbb Z}$, the Fourier coefficients of the function $\Delta_h^\Theta (f)$ satisfy the equality
\[
    |A_k(\Delta_h^{\Theta}(f))|=
    |A_k(f)| \Big|\sum_{j=0}^m \theta_j {\mathrm e}^{-{\mathrm i}\lambda_k jh}\Big|.
\]
Therefore, taking into account (\ref{def-Orlicz-norm}), we see that for $\varphi_{\Theta}(t)= |\sum_{j=0}^m \theta_j {\mathrm e}^{-{\mathrm i} jt} |$,
 $
 \omega_{\varphi_{\Theta}}(f,\delta)_{_{\scriptstyle {\bf M}}} =
 \omega_{\Theta}(f,\delta)_{_{\scriptstyle {\bf M}}}.
 $
In particular, for  $\varphi_m(t)= 2^m   |\sin (t/2) |^m = 2^{\frac { m   }2} (1-\cos t)^{\frac m2}$, $m\in {\mathbb N}$, we have
$
 \omega_{\varphi_m}(f,\delta)_{_{\scriptstyle {\bf M}}} =
 \omega_m(f,\delta)_{_{\scriptstyle {\bf M}}}.
 $

%%%%%%%%%%%%%%%%%%%%%%%%%%%%%%%%%%%%%%%%%%%%%%%%%%%%%%%%%%%%%%%%%%%%%%%%%%%%%%%%%%%%%%%%%%%%%%%
Further, let
$$
    F_h (f,t)=f_h(x)
    :=\frac{1}{2h} \int\limits_{t-h}^{t+h} f(u) du
$$
be the Steklov function of a function $f \in B{\mathcal S}_{\bf M}.$ Define the differences as follows
$$
 \widetilde{\Delta}_h^1 (f):= \widetilde{\Delta}_h^1 (f,t)=  F_h (f,t)-f(t)=(F_h-\mathbb{I}) (f,t),
$$
$$
  \widetilde{\Delta}_h^m (f):=\widetilde{\Delta}_h^m (f,t)=\widetilde{\Delta}_h^1(\Delta_h^{m-1}(f),t)=(F_h-\mathbb{I})^m (f,t)=
  \sum_{k=0}^m k^{m-k} {m \choose k} F_{h,k} (f,t),
$$
where $m=2,3,\ldots$, $F_{h,0}(f):=f,$ $F_{h,k}(f):=F_h(F_{h,k}(f))$ and $\mathbb{I}$ is the identity operator in $B{\mathcal S}_{\bf M}.$ Consider the following smoothness characteristics
\begin{equation}\label{tilde-omega-def}
    \widetilde{\omega}_m (f, \delta):= \sup_{0\le h \le \delta}
    \|\widetilde{\Delta}_h^m (f) \|_{_{\scriptstyle {\bf M}}}, \quad \delta>0.
\end{equation}
It can be shown \cite{Abilov+Abilova-Math-note-2004} that $\omega_{\tilde{\varphi}_m}(f,\delta)_{_{\scriptstyle {\bf M}}} =
 \widetilde{\omega}_m(f,\delta)_{_{\scriptstyle {\bf M}}}$ for  for $\tilde{\varphi}_m(t)=(1-\mathrm{sinc}~ t)^m,$ $m \in \mathbb{N},$ where  $\mathrm{sinc}~ t=\{\sin t/t, ~ \mbox{when} ~t\not=0, \quad 1,  ~\mbox{when} ~t=0\}.$

In the general case,  moduli similar to (\ref{general_modulus})  were studied  in
\cite{Boman_1980, Vasil'ev_2001, Kozko_Rozhdestvenskii_2004, Vakarchuk_2016, Babenko_Savela_2012, Abdullayev_Chaichenko_Shidlich_2021, Abdullayev_Serdyuk_Shidlich_2021, Abdullayev_Chaichenko_Shidlich_2021_RMJ, Serdyuk-Shidlich_2022}
etc.

%%%%%%%%%%%%%%%%%%%%%%%%%%%%%%%%%%%%%%%%%%%%%%%%%%%%%%%%%%%%%%%%%%%%%%%%%%%%%%%%%%%%%%%%%%%%%%%%%%%%%%%%%%%%%%%%%%%%%%%%%%%%%%%%%

%%%%%%%%%%%%%%%%%%%%%%%%%%%%%%%%%%%%%%%%%%%%%%%%%%%%%%%%%%%%%%%%%%%%%%%%%%%%%%%%%%%%%%%%%%%%

\section{Main results}

\subsection{Jackson type inequalities}
In this subsection, direct theorems are established for functions $f \in B{\mathcal S}_{\bf M}$  in terms of the best
approximations and  generalized moduli of smoothness. In particular, for functions $  f \in B{\mathcal S}_{\bf M}$ with the Fourier series of the form (\ref{Fourier_Series}), we prove Jackson type inequalities of the kind as
$$
    E_{\lambda_n}(f)_{_{\scriptstyle {\bf M}}}\le
    K(\tau )\omega_\varphi
    \Big(f, \frac {\tau }{\lambda_n}\Big)_{_{\scriptstyle {\bf M}}},
    \quad \tau >0, \quad n\in {\mathbb N}.
$$

Let $V(\tau)$, $\tau>0$, be a set of bounded nondecreasing functions $v $ that differ from a constant on $[0, \tau].$

 \begin{theorem}    \label{Th.1}
   Assume that the function $f\in B{\mathcal S}_{\bf M}$  has the Fourier series of the form (\ref{Fourier_Series}). Then for any  $\tau >0$, $n\in {\mathbb N}$ and $\varphi\in \Phi$ the following inequality holds:
\begin{equation}\label{En<omega}
    E_{\lambda_n}(f)_{_{\scriptstyle {\bf M}}}\le
    K_{n,\varphi}(\tau )
    \omega_\varphi\Big(f, \frac {\tau }{\lambda_n}\Big)_{_{\scriptstyle {\bf M}}},
  \end{equation}
where
\begin{equation}\label{(6.8)}
    K_{n,\varphi}(\tau ):=\inf\limits _{v  \in  V(\tau )}
    \frac {v  (\tau )-v  (0)}  {I_{n,\varphi}(\tau ,v  )},
\end{equation}
 and
\begin{equation}\label{(6.9)}
    I_{n,\varphi}(\tau ,v  ):= \inf\limits _{{k \in {\mathbb N}},\,k \ge n}
    \int\limits _0^{\tau }\varphi
    \Big(\frac{\lambda_k t}{\lambda_n} \Big){\mathrm d}v  (t).
\end{equation}
 Furthermore, there exists a function $v _*\in V(\tau )$ that realizes the greatest lower bound in (\ref{(6.8)}).
\end{theorem}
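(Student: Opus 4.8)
The plan is to reduce everything to the Fourier coefficients, exploiting that the $B{\mathcal S}_{\bf M}$-norm is determined by them through \eqref{def-Orlicz-norm}. First I would establish the auxiliary identity
\[
    E_{\lambda_n}(f)_{\bf M}
    =\sup_{\gamma\in\Gamma}\sum_{|k|\ge n}\gamma_k|A_k(f)|.
\]
Indeed, a function $g\in G_{\lambda_n}$ is exactly one whose Fourier coefficients $B_k$ vanish whenever $|\lambda_k|\ge\lambda_n$, i.e. for $|k|\ge n$; hence for every $\gamma\in\Gamma$, discarding the nonnegative terms with $|k|<n$,
\[
    \|f-g\|_{\bf M}\ge\sum_{k}\gamma_k|A_k-B_k|
    \ge\sum_{|k|\ge n}\gamma_k|A_k(f)|,
\]
so that $\|f-g\|_{\bf M}\ge\sup_{\gamma}\sum_{|k|\ge n}\gamma_k|A_k(f)|$, and the truncation $g=\sum_{|k|<n}A_k{\mathrm e}^{{\mathrm i}\lambda_k x}\in G_{\lambda_n}$ attains this bound, which gives the identity.

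Next I fix $v\in V(\tau)$ and $\gamma\in\Gamma$. Choosing $h=t/\lambda_n$ with $t\in[0,\tau]$ (so $|h|\le\tau/\lambda_n$) in \eqref{general_modulus} yields, for each such $t$,
\[
    \sum_{k}\gamma_k\varphi\Big(\frac{\lambda_k t}{\lambda_n}\Big)|A_k(f)|
    \le\omega_\varphi\Big(f,\frac{\tau}{\lambda_n}\Big)_{\bf M}.
\]
Integrating against the positive measure ${\mathrm d}v$ over $[0,\tau]$ and interchanging the sum and the integral by Tonelli's theorem (all summands are nonnegative), I obtain
\[
    \sum_{k}\gamma_k|A_k(f)|\int_0^\tau\varphi\Big(\frac{\lambda_k t}{\lambda_n}\Big){\mathrm d}v(t)
    \le\big(v(\tau)-v(0)\big)\,\omega_\varphi\Big(f,\frac{\tau}{\lambda_n}\Big)_{\bf M}.
\]
Since $\varphi$ is even and $\lambda_{-k}=-\lambda_k$, for every index with $|k|\ge n$ the inner integral equals $\int_0^\tau\varphi(\lambda_{|k|}t/\lambda_n)\,{\mathrm d}v(t)\ge I_{n,\varphi}(\tau,v)$ by \eqref{(6.9)}. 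Keeping only the terms with $|k|\ge n$ on the left therefore gives $I_{n,\varphi}(\tau,v)\sum_{|k|\ge n}\gamma_k|A_k(f)|$ as a lower bound; passing to the supremum over $\gamma\in\Gamma$ and using the identity above yields
\[
    I_{n,\varphi}(\tau,v)\,E_{\lambda_n}(f)_{\bf M}
    \le\big(v(\tau)-v(0)\big)\,\omega_\varphi\Big(f,\frac{\tau}{\lambda_n}\Big)_{\bf M}.
\]
Dividing (when $I_{n,\varphi}(\tau,v)>0$) and taking the infimum over $v\in V(\tau)$ produces \eqref{En<omega} with the constant \eqref{(6.8)}.

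For the final assertion I would observe that the ratio in \eqref{(6.8)} is unchanged under $v\mapsto av+b$ with $a>0$, so I may normalize $v(0)=0$, $v(\tau)=1$; then $K_{n,\varphi}(\tau)=1/\sup_v I_{n,\varphi}(\tau,v)$, the supremum running over distribution functions of probability measures on $[0,\tau]$. For each fixed $k$ the map $v\mapsto\int_0^\tau\varphi(\lambda_k t/\lambda_n)\,{\mathrm d}v(t)$ is continuous with respect to weak-$*$ convergence, because $t\mapsto\varphi(\lambda_k t/\lambda_n)$ is continuous and bounded on $[0,\tau]$; hence $v\mapsto I_{n,\varphi}(\tau,v)$, being an infimum of weak-$*$ continuous functionals, is upper semicontinuous. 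The set of normalized $v$ is weak-$*$ compact by Helly's selection theorem, and an upper semicontinuous functional attains its supremum on a compact set, so the maximizer $v_*$ exists and, restored to unnormalized form, realizes the greatest lower bound in \eqref{(6.8)}. I expect this compactness/semicontinuity step to be the main obstacle, since it is the only part that leaves the purely algebraic manipulation of Fourier coefficients; the subtlety is to justify upper semicontinuity of the infimum $I_{n,\varphi}$ over the infinite family $\{k\ge n\}$ and to ensure the extremal $v_*$ indeed belongs to $V(\tau)$, i.e. is nonconstant on $[0,\tau]$.
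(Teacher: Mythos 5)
Your argument for the inequality (\ref{En<omega}) is correct and follows essentially the same route as the paper: both proofs reduce the best approximation to the coefficient identity $E_{\lambda_n}(f)_{_{\scriptstyle {\bf M}}}=\sup_{\gamma\in\Gamma}\sum_{|k|\ge n}\gamma_k|A_k(f)|$ (the paper's relation (\ref{Best_Approx})), then integrate the weighted coefficient sums against ${\mathrm d}v$ over $[0,\tau]$ and invoke the definition (\ref{(6.9)}) of $I_{n,\varphi}(\tau,v)$. Your version is slightly cleaner in that you keep an arbitrary $\gamma\in\Gamma$ and pass to the supremum at the end, whereas the paper fixes an $\varepsilon$-near-optimal sequence $\tilde\gamma$ at the outset and lets $\varepsilon\to 0$; the paper's ordering has the side benefit of producing the intermediate integral inequality (\ref{inequal-En-int}), which it reuses in the corollaries, but this is not needed for the theorem itself. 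The genuine divergence is the last claim: the paper does not prove the existence of $v_*$ at all (it only cites Theorem 1 of \cite{Serdyuk-Shidlich_2022}), while you supply a self-contained normalization--compactness--semicontinuity argument. That argument is sound in outline ($I_{n,\varphi}(\tau,\cdot)$ is an infimum of weak-$*$ continuous functionals, hence upper semicontinuous, and attains its maximum on the weak-$*$ compact set of probability measures on $[0,\tau]$), and your closing caveat is exactly the right one: the maximizing measure fails to give an element of $V(\tau)$ only if it is the point mass at $0$, in which case $\sup_v I_{n,\varphi}(\tau,v)=0$ and $K_{n,\varphi}(\tau)=+\infty$, a degenerate situation that neither you nor the paper excludes explicitly (both implicitly read the ratio in (\ref{(6.8)}) with the convention that division by $I_{n,\varphi}=0$ gives $+\infty$). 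Apart from spelling out that degenerate case, nothing is missing.
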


In the spaces  $L_2$ of $2\pi$-periodic square-summable
functions, for moduli of continuity $\omega_m(f;\delta)$ and $\tilde{\omega}_m (f;\delta)$, such result was obtained by  Babenko \cite{Babenko_1986}, and  Abilov and Abilova \cite{Abilov+Abilova-Math-note-2004}, respectively.
In the spaces ${\mathcal S}^p$  of functions of one and several variables, this result for classical moduli of smoothness was obtained in \cite{Stepanets_Serdyuk_2002} and   \cite{Abdullayev_Ozkartepe_Savchuk_Shidlich_2019}, respectively. In the Musielak-Orlicz  spaces ${\mathcal S}_{\bf M}$, similar result was obtained for generalized moduli of smoothness  in \cite{Abdullayev_Chaichenko_Shidlich_2021}.

In the Besicovich-Stepanets spaces $B{\mathcal S}^p$, a similar theorem  was proved in \cite{Serdyuk-Shidlich_2022}. It was noted above that  in the case when all  functions $M_k(u)=u^p\Big(p^{-1/p}q^{-1/p'}\Big)^p$, $p>1$, $1/p+1/p'=1$, we have $B{\mathcal S}_{\bf M}=B{\mathcal S}^{p}$ and $ \|f\|_{_{\scriptstyle  {\bf M}}}=\| f\|_{_{\scriptstyle  B{\mathcal S}^p}}$. In the case $p=1$, the similar equalities $B{\mathcal S}_{\bf M}=B{\mathcal S}^{1}$ and  $ \|f\|_{_{\scriptstyle  {\bf M}}}=\| f\|_{_{\scriptstyle  B{\mathcal S}^1}}$  obviously can be obtained if all $M_k(u)=u$, $k\in {\mathbb Z}$, and the set
$\Gamma$ is a set of all sequences of  positive numbers $\gamma=\{\gamma_k\}_{k\in \mathbb{Z}}$ such that  $\|\gamma\|_{l_\infty({\mathbb Z})}=\sup_{k\in \mathbb{Z}}\gamma_k \le 1$. Comparing estimate (\ref{En<omega}) with the corresponding result of Theorem 1  from \cite{Serdyuk-Shidlich_2022}, we see that in the case when $B{\mathcal S}_{\bf M}=B{\mathcal S}^{1}$, the inequality  (\ref{En<omega}) is unimprovable on the set of all functions
      $f\in B{\mathcal S}^{1}$, $f\not \equiv {\rm const}$. Furthermore,   Theorem 1 \cite{Serdyuk-Shidlich_2022} implies the existence of the function $v _*\in V(\tau )$ that realizes the greatest lower bound in (\ref{(6.8)}).

\begin{proof}

In the proof of Theorem \ref{Th.1}, we mainly use the ideas outlined in
\cite{Chernykh_1967, Chernykh_1967_MZ, Babenko_1986, Stepanets_Serdyuk_2002, Serdyuk-Shidlich_2022},   taking into account
the peculiarities of the spaces $B{\mathcal S}_{\bf M}$.  From %relations
 (\ref{def-Orlicz-norm}) and (\ref{Best_Approximation_Deff}), it follows that for any   $f\in B{\mathcal S}_{\bf M}$
with  the Fourier series of the form (\ref{Fourier_Series}), we have
  \begin{equation} \label{Best_Approx}% $$
     E_{\lambda_n}(f)_{_{\scriptstyle \mathbf{M}}}=\|f-S_n(f)\|_{\bf M} =
     \sup \Bigg\{ \sum\limits_{|k|\ge n} \gamma_k |A_k(f)|:~ \gamma \in \Gamma \Bigg\}
 \end{equation}
where $S_n(f):=\sum _{|k|<n} A_{k}(f) {\mathrm e}^{ {\mathrm i}\lambda_k x}$.

By the definition of supremum, for  arbitrary $\varepsilon>0$  there exists a sequence $\tilde{\gamma} \in \Gamma$,
$\tilde{\gamma}=\tilde{\gamma}(\varepsilon)$, such that the following relations holds:
\[
    \sum\limits_{|k| \ge n} \tilde{\gamma}_k |A_k(f)|+\varepsilon  \ge \sup \Big\{ \sum\limits_{|k| \ge n}
    \gamma_k |A_k(f) |: \   \gamma\in \Gamma\Big\}.
\]

 For arbitrary $\varphi\in \Phi$ and $h\in {\mathbb R}$, %and for any $f\in B{\mathcal S}_{\bf M}$,
 consider the sequence  of numbers $ \{\varphi(\lambda_kh) A_k(f) \}_{k\in {\mathbb Z}}$. If there exists a function
$\Delta_h^\varphi (f)\in B$-a.p. such that for all $k\in {\mathbb Z}$
  \begin{equation}\label{modulus_generalize difference_Fourier_Coeff}
     A_k(\Delta_h^\varphi (f))=\varphi(\lambda_kh) A_k(f),
 \end{equation}
then here and below we denote by  $\|\Delta_h^\varphi (f)\|_{\bf M}$
the Orlicz norm  (\ref{def-Orlicz-norm}) of the function  $\Delta_h^\varphi (f)$. If such a $B$-a.p function $\Delta_h^\varphi (f)$
does not exist, then to simplify notation we  also use the notation $\|\Delta_h^\varphi (f)\|_{\bf M}$,
meaning by it the $l_{\bf M}$-norm of the sequence
$ \{\varphi(\lambda_kh) A_k(f) \}_{k\in {\mathbb Z}}$. In view of (\ref{def-Orlicz-norm}) and (\ref{modulus_generalize difference_Fourier_Coeff}), we have
\[
    \|\Delta _h^\varphi f\|_{_{\scriptstyle  {\bf M}}} \ge
    \sup \Big\{ \sum\limits_{|k|\ge n}
    \gamma_k\varphi(\lambda_k h)| A_k(f) |: \   \gamma\in \Gamma \Big\}
    \ge \sum\limits_{|k|\ge n}
    \tilde{\gamma}_k \varphi(\lambda_k h)|A_k(f)|
\]
\[
   = \frac{I_{n,\varphi}(\tau ,v   )}{v (\tau)-v (0)}
   \sum\limits_{|k|\ge n} \tilde{\gamma}_k |A_k (f) |+
   \sum\limits_{|k|\ge n} \tilde{\gamma}_k |A_k (f)|
   \Big(\varphi(\lambda_k h)-\frac{I_{n,\varphi}(\tau ,v)}{v (\tau)-v (0)}\Big).
\]
For  any $ u\in [0,\tau]$, we get
 \begin{equation}\label{diff_est}
    \|\Delta _{\frac{u}{\lambda_n}}^\varphi f \|_{_{\scriptstyle  {\bf M}}} \ge
     \frac{I_{n,\varphi}(\tau ,v)}{v (\tau)-v (0)}
     \sum\limits_{|k|\ge n} \tilde{\gamma}_k |A_k(f) |
$$
$$
     +\sum\limits_{|k|\ge n} \tilde{\gamma}_k|A_k(f) |
     \bigg(\varphi\Big(\frac {\lambda_k u}{\lambda_n}\Big)-
     \frac{I_{n,\varphi}(\tau ,v   )}{v (\tau)-v (0)}\bigg).
 \end{equation}
The both sides of inequality (\ref{diff_est}) are nonnegative  and, in view of the
boundedness of the function $\varphi$, the series on its right-hand side is majorized on the entire
real axis by the absolutely convergent series
  $K(\varphi)\sum_{|k|\ge n} \tilde{\gamma}_k|A_k(f) |$, where
  $K(\varphi):=\max_{u\in {\mathbb R}} \varphi(u)$. Then
 integrating this inequality with respect to $dv  (u)$  from  $0$  to $\tau,$ we get
\[
    \int\limits_0^{\tau } \|\Delta_{\frac {u}{\lambda_n}}^\varphi f
    \|_{_{\scriptstyle  {\bf M}}}{\mathrm d}v(u)
    \ge I_{n,\varphi}(\tau ,v) \sum\limits_{|k|\ge n} \tilde{\gamma}_k |A_k(f)|
\]
\[
    +\sum\limits_{|k|\ge n} \tilde{\gamma}_k |A_k(f)|
    \bigg(\int\limits_0^{\tau } \varphi\Big(\frac{\lambda_k u}{\lambda_n}\Big){\mathrm d}v(u) - I_{n,\varphi}(\tau ,v) \bigg).
 \]

By virtue of the definition of $I_{n,\varphi}(\tau ,v   )$, we see that the second term on the right-hand side of
the last relation is nonnegative. Therefore, for any function $v  \in  V(\tau )$,  we have
\[
    \int\limits_0^{\tau }\|\Delta _{\frac{u}{\lambda_n}}^\varphi f
    \|_{_{\scriptstyle  {\bf M}}}{\mathrm d}v(u)  \ge
    I_{n,\varphi}(\tau ,v)  \sum\limits_{|k|\ge n}
    \tilde{\gamma}_k |A_k(f) |
\]
\[
    \ge I_{n,\varphi}(\tau ,v) \bigg( \sup \Big\{ \sum\limits_{|k| \ge n}
    \gamma_k |A_k(f) |: \   \gamma\in \Gamma\Big\} -\varepsilon \bigg),
\]
wherefrom due to an arbitrariness of choice of the number  $\varepsilon$, we conclude that
 \[
    \int\limits_0^{\tau }\|\Delta _{\frac{u}{\lambda_n}}^\varphi f
    \|_{_{\scriptstyle  {\bf M}}}{\mathrm d}v(u)  \ge
     I_{n,\varphi}(\tau ,v) E_{\lambda_n} (f)_{_{\scriptstyle  {\bf M}}}.
\]
 Hence,
 \begin{equation}\label{inequal-En-int}
   E_{\lambda_n} (f)_{_{\scriptstyle  {\bf M}}} \le \frac{1}{I_{n,\varphi}(\tau ,v)}
    \int\limits_0^{\tau }\|\Delta _{\frac{u}{\lambda_n}}^\varphi f
    \|_{_{\scriptstyle  {\bf M}}}{\mathrm d}v(u) \le
    \frac {1}{I_{n,\varphi}(\tau ,v   )} \int\limits_0^{\tau }
    \omega_\varphi \Big(f,\frac{u}{\lambda_n}\Big)_{_{\scriptstyle  {\bf M}}} {\mathrm d}v(u),
 \end{equation}
whence  taking into account nondecreasing of the function $\omega_\varphi$,
we immediately obtain relation (\ref{En<omega}).
%%%%%%%%%%%%%%%%%%%%%%%%%%%%%%%%%%%%%%%%%%%%%%%%%%%%%%%%%%%%%%%%%%%%%%%%%%%%%%%%

\end{proof}

Now we consider some realisations of Theorem  \ref{Th.1}. Setting   $\varphi_\alpha(t) = 2^{\frac {\alpha   }2} (1-\cos t)^{\frac \alpha 2}$, $\alpha>0,$  $ \omega_{\varphi_\alpha}(f,\delta)_{_{\scriptstyle  {\bf M}}}=: \omega_\alpha(f,\delta)_{_{\scriptstyle  {\bf M}}}$, $\tau=\pi,$  and $v(u)=1-\cos u,$ $u\in [0,\pi],$   we get the following assertion.

 \begin{corollary}\label{Cor.1}
    For arbitrary numbers  $n\in {\mathbb N}$ and $\alpha>0$, and for any function $f\in B{\mathcal S}_{\mathbf{M}}$
    with the Fourier series of the form  (\ref{Fourier_Series}),  the following inequalities holds:
\begin{equation} \label{cor-inequal-omega-m}
    E_{\lambda_n} (f)_{_{\scriptstyle  {\bf M}}} \le
    \frac {1}{2^{\alpha\over 2}I_{n}({\alpha\over 2})} \int\limits_0^{\pi }
    \omega_\alpha \Big(f,\frac{u}{\lambda_n}\Big)_{_{\scriptstyle  {\bf M}}}
    \sin u \, {\mathrm d} u,
\end{equation}
where
\begin{equation} \label{In_1}
 I_{n}\Big({\alpha\over 2}\Big)= \inf_{k \in \mathbb{N}, k\ge n} \int_0^\pi
    \Big(1-\cos \frac{\lambda_k u}{\lambda_n}\Big)^{\alpha\over 2} \sin u \, {\mathrm d} u.
\end{equation}
If, in addition $\frac {\alpha }2\in {\mathbb N}$, then
\begin{equation} \label{In_12}
    I_n\Big(\frac {\alpha }2\Big)=\frac {2^{\frac { \alpha}2+1}}{\frac {\alpha }2+1},
\end{equation}
and the inequality (\ref{cor-inequal-omega-m}) cannot be improved for any
$n\in {\mathbb N}.$
 \end{corollary}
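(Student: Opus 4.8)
The plan is to read off the inequality (\ref{cor-inequal-omega-m}) from the proof of Theorem~\ref{Th.1} and then to evaluate and estimate the constant. The function $v(u)=1-\cos u$ is nondecreasing and nonconstant on $[0,\pi]$, so $v\in V(\pi)$; substituting $\varphi=\varphi_\alpha$, $\tau=\pi$ and this $v$ (so that $\mathrm{d}v(u)=\sin u\,\mathrm{d}u$) into the intermediate estimate (\ref{inequal-En-int}) and using $\omega_{\varphi_\alpha}(f,\cdot)_{\bf M}=\omega_\alpha(f,\cdot)_{\bf M}$ yields (\ref{cor-inequal-omega-m}) at once, because by (\ref{(6.9)})
\[
    I_{n,\varphi_\alpha}(\pi,v)=\inf_{k\ge n}\int_0^\pi 2^{\frac\alpha2}\Big(1-\cos\tfrac{\lambda_k u}{\lambda_n}\Big)^{\frac\alpha2}\sin u\,\mathrm{d}u=2^{\frac\alpha2}\,I_n\big(\tfrac\alpha2\big),
\]
which is exactly (\ref{In_1}). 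So the first part of the corollary needs no separate work.

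For $m:=\tfrac\alpha2\in{\mathbb N}$ I put $\beta_k=\lambda_k/\lambda_n\ge1$ and $J(\beta)=\int_0^\pi(1-\cos\beta u)^m\sin u\,\mathrm{d}u$, so that $I_n(m)=\inf_{k\ge n}J(\beta_k)$; since $\lambda_{k+1}>\lambda_k$, one has $\beta_n=1$ and $\beta_k>1$ for $k>n$. The value $J(1)$ follows from the substitution $w=1-\cos u$,
\[
    J(1)=\int_0^2 w^m\,\mathrm{d}w=\frac{2^{m+1}}{m+1}.
\]
Hence (\ref{In_12}) will follow once I show that $J(\beta)\ge J(1)$ for every real $\beta\ge1$: then the infimum over $k\ge n$ is attained at $k=n$ and equals $\tfrac{2^{m+1}}{m+1}$.

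To prove this key inequality I would expand the integrand into the finite cosine polynomial
\[
    (1-\cos\theta)^m=\frac{\binom{2m}{m}}{2^m}+\frac1{2^{m-1}}\sum_{l=1}^m(-1)^l\binom{2m}{m-l}\cos(l\theta),
\]
and use $\int_0^\pi\cos(au)\sin u\,\mathrm{d}u=\frac{1+\cos(a\pi)}{1-a^2}$ for $a\neq1$. For $\beta>1$ every $l\beta>1$, so there is no singular term and
\[
    J(\beta)=\frac{\binom{2m}{m}}{2^{m-1}}+\frac1{2^{m-1}}\sum_{l=1}^m(-1)^{l+1}\binom{2m}{m-l}\,\frac{1+\cos(l\beta\pi)}{(l\beta)^2-1}.
\]
The odd-$l$ terms are nonnegative and may be dropped; each even-$l$ term is negative but, using $1+\cos(l\beta\pi)\le2$ and $(l\beta)^2-1\ge l^2-1$ for $\beta\ge1$, satisfies the bound $\ge-\tfrac{2}{2^{m-1}}\binom{2m}{m-l}\tfrac1{l^2-1}$. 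This leaves the $\beta$-independent quantity $L:=\tfrac{\binom{2m}{m}}{2^{m-1}}-\tfrac{2}{2^{m-1}}\sum_{l\ \mathrm{even}}\binom{2m}{m-l}\tfrac1{l^2-1}$. Evaluating the cosine expansion at $\beta=1$ — which by continuity of $J$ must equal $J(1)$ — the terms with odd $l\ge3$ vanish (since $1+\cos l\pi=0$), the $l=1$ term tends to $0$, and the even-$l$ terms reduce to precisely the bounds just used; thus $L=J(1)=\tfrac{2^{m+1}}{m+1}$, whence $J(\beta)\ge J(1)$ for all $\beta\ge1$. The main obstacle is exactly this term-by-term estimate: one must recognise that the negative even-$l$ contributions are extremal precisely at $\beta=1$ and that nothing is lost there by discarding the odd-$l$ terms, so that the lower bound returns to $J(1)$ rather than to something strictly smaller.

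Finally, to show that (\ref{cor-inequal-omega-m}) cannot be improved I would test it on $f(x)=\mathrm{e}^{\mathrm{i}\lambda_n x}$, for which $A_n(f)=1$ and $A_k(f)=0$ for $k\neq n$. Then $S_n(f)=0$, so by (\ref{Best_Approx}) $E_{\lambda_n}(f)_{\bf M}=\sup\{\gamma_n:\gamma\in\Gamma\}=:C_n$, while by (\ref{general_modulus}) and the monotonicity of $1-\cos$ on $[0,\pi]$,
\[
    \omega_\alpha\Big(f,\frac u{\lambda_n}\Big)_{\bf M}=C_n\,2^m(1-\cos u)^m,\qquad u\in[0,\pi].
\]
Inserting this into the right-hand side of (\ref{cor-inequal-omega-m}) and using $I_n(m)=\tfrac{2^{m+1}}{m+1}$ collapses it to $C_n=E_{\lambda_n}(f)_{\bf M}$, so equality holds for this $f$; therefore the constant in (\ref{cor-inequal-omega-m}) is best possible for every $n\in{\mathbb N}$.
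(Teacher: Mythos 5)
Your proposal is correct, and its overall architecture coincides with the paper's: inequality (\ref{cor-inequal-omega-m}) is read off from (\ref{inequal-En-int}) with $\varphi=\varphi_\alpha$, $\tau=\pi$, $v(u)=1-\cos u$; the constant is identified by showing $\int_0^\pi(1-\cos\beta u)^{m}\sin u\,\mathrm{d}u\ge \frac{2^{m+1}}{m+1}$ for $\beta\ge1$ with equality at $\beta=1$; and sharpness is checked on an exponential with spectrum $\{\pm\lambda_n\}$. The one substantive difference is how the key integral inequality is obtained. The paper simply cites it (relation (52) of Stepanets--Serdyuk), whereas you prove it from scratch by expanding $2^m(1-\cos\theta)^m=\binom{2m}{m}+2\sum_{l=1}^m(-1)^l\binom{2m}{m-l}\cos(l\theta)$, integrating term by term against $\sin u$, discarding the nonnegative odd-$l$ contributions, and bounding the even-$l$ ones by their values at $\beta=1$; the observation that the resulting $\beta$-independent lower bound collapses back to $J(1)$ (because the odd-$l$ terms vanish in the limit $\beta\to1^+$) is correct and makes the constant's extremality transparent, at the cost of a page of computation the paper avoids by citation. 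Your extremal function $f(x)=\mathrm{e}^{\mathrm{i}\lambda_n x}$ is a special case of the paper's $f^*=\gamma+\beta \mathrm{e}^{-\mathrm{i}\lambda_n x}+\delta \mathrm{e}^{\mathrm{i}\lambda_n x}$, and keeping the factor $C_n=\sup\{\gamma_n:\gamma\in\Gamma\}$ explicit (rather than asserting $E_{\lambda_n}(f^*)_{\bf M}=|\beta|+|\delta|$ as the paper does) is in fact the cleaner way to run the cancellation, since the same factor appears on both sides of (\ref{cor-inequal-omega-m}). Both versions tacitly assume $0<C_n<\infty$, which is a background hypothesis on $\mathbf M$ shared with the paper.
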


 \begin{proof} Estimate (\ref{cor-inequal-omega-m}) follows from (\ref{inequal-En-int}).  In \cite[relation (52)]{Stepanets_Serdyuk_2002}, it was shown that for any  $\theta\ge 1$ and $s\in {\mathbb N}$
the following inequality holds:
 \[
 \int_0^\pi (1-\cos\theta t)^s \sin t {\mathrm d}t\ge  \frac {2^{s+1}}{s+1},
 \]
which turns into equality for $\theta=1$. Therefore,  setting  $s={\alpha\over 2}$ and $\theta=\frac {\lambda_\nu}{\lambda_n}$, $\nu=n,n+1,\ldots$, and  the monotonicity  of the sequence of Fourier exponents $\{\lambda_k\}_{k\in {\mathbb Z}}$,
we see that for $\frac {\alpha}2\in {\mathbb N}$, indeed, the equality (\ref{In_12}) holds.

Let us prove that in this case, the constant $\frac {{\alpha\over 2}+1}{2^{\alpha+1}}$ in  inequality (\ref{cor-inequal-omega-m}) is unimprovable for
$\frac {\alpha }2\in {\mathbb N}$. It suffices to verify that  the function
\begin{equation}\label{A6.53}
    f^*(x)=\gamma +\beta e^{-\lambda_n x} + \delta e^{\lambda_n x},
\end{equation}
where $\gamma$, $\beta$ and $\delta $ are arbitrary complex numbers, satisfies the equality
\begin{equation}\label{(6.53)}
    E_{\lambda_n} (f^*)_{_{\scriptstyle  {\mathbf{M}}}} =
    \frac {\frac { \alpha}{2}+1}{2^{ \alpha+1}}\int\limits _0^{\pi }
    \omega _\alpha\Big(f^*, \frac t{\lambda_n}\Big)_{_{\scriptstyle  {\mathbf{M}}}}
    \sin t\, {\mathrm d}t, \ \  \   \alpha >0.
\end{equation}
We have
$
    E_{\lambda_n} (f^*)_{_{\scriptstyle  {\mathbf{M}}}} =|\beta |+|\delta |,
$
the function
$
    \|\Delta_{{u}/{\lambda_n}}^ {\varphi_\alpha}  f^*\|_{_{\scriptstyle  {\mathbf{M}}}}
    =2^{\frac {\alpha}2}(|\beta |+|\delta |) (1-\cos u)^{\frac { \alpha}2}
 $
does not decrease with respect to $u$ on $[0,\pi ].$ Therefore,
$
    \omega _\alpha  (f^*, \frac {u}{\lambda_n} )_{_{\scriptstyle  {\mathbf{M}}}} =
    \|\Delta_{u/\lambda_n}^{\varphi_\alpha}  f^*\|_{_{\scriptstyle  {\mathbf{M}}}},
$
and
\[
    \frac {2^{ \alpha+1}} {\frac { \alpha }2+1}E_{\lambda_n} (f^*)_{_{\scriptstyle{\mathbf{M}}}} -\int\limits _0^{\pi } \omega _\alpha
    \Big(f^*, \frac t{\lambda_n}\Big)_{_{\scriptstyle  {\mathbf{M}}}}
    \sin t\, {\mathrm d}t
\]
\[
    =(|\beta |+|\delta |)\Big(\frac {2^{\alpha+1}}{\frac {\alpha }2+1}-
    2^{\frac { \alpha}2}\int_0^\pi (1-\cos  t)^{\frac {\alpha}2} \sin t \,{\mathrm d}t\Big)=0
\]

\end{proof}

It was shown in   \cite{Stepanets_Serdyuk_2002} that  $I_n(s)\ge 2$ when $s\ge 1$
and  $I_n(s)\ge 1+2^{s-1}$ when $s\in (0,1)$.
Combining these two estimates and (\ref{cor-inequal-omega-m}), we obtain the following statement, which establishes a Jackson-type inequality with a constant uniformly bounded in the parameter   $n\in {\mathbb N}$.

\begin{corollary} \label{Theorem 2.4.}
   Assume that the function   $f\in B{\mathcal S}_{\mathbf{M}}$ has the Fourier series of the form (\ref{Fourier_Series})  and  $\|f-A_0(f)\|_{_{\scriptstyle  {\bf M}}} \not= 0$. Then for any   $n\in {\mathbb N}$ and $\alpha>0$,
  \begin{equation}\label{(6.16)}
       E_{\lambda_n} (f)_{_{\scriptstyle  {\bf M}}} <  \frac {4}{3\cdot 2^{\alpha/2}}\omega_\alpha
  \Big(f, \frac {\pi }{\lambda_n}\Big)_{_{\scriptstyle  {\bf M}}}.
  \end{equation}
  Furthermore, in the case where $\alpha=m\in {\mathbb N}$, the following more accurate estimate holds:
  \begin{equation}\label{(6.161)}
       E_{\lambda_n} (f)_{_{\scriptstyle  {\bf M}}} < \frac {4-2\sqrt{2}}{2^{m/2}}\omega_m
  \Big(f, \frac {\pi }{\lambda_n}\Big)_{_{\scriptstyle  {\bf M}}}.
   \end{equation}
\end{corollary}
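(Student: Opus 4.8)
The plan is to read off both estimates from inequality (\ref{cor-inequal-omega-m}) of Corollary \ref{Cor.1} by estimating the integral on its right-hand side from above and the quantity $I_n(\alpha/2)$ from below. I would start from
$$E_{\lambda_n}(f)_{\bf M}\le\frac{1}{2^{\alpha/2}I_n(\alpha/2)}\int_0^\pi\omega_\alpha\Big(f,\frac{u}{\lambda_n}\Big)_{\bf M}\sin u\,{\mathrm d}u,$$
and use that $\omega_\alpha(f,\cdot)_{\bf M}$ is nondecreasing to bound $\omega_\alpha(f,u/\lambda_n)_{\bf M}$ by its largest value $\omega_\alpha(f,\pi/\lambda_n)_{\bf M}$ on $[0,\pi]$. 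Since $\int_0^\pi\sin u\,{\mathrm d}u=2$, this gives $E_{\lambda_n}(f)_{\bf M}\le\frac{2}{2^{\alpha/2}I_n(\alpha/2)}\omega_\alpha(f,\pi/\lambda_n)_{\bf M}$, and everything reduces to controlling $I_n(\alpha/2)$.

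Next I would feed in the two lower bounds for $I_n$ recalled just before the statement. Writing $s=\alpha/2$, for $s\ge1$ one has $I_n(s)\ge2$, while for $s\in(0,1)$ one has $I_n(s)\ge1+2^{s-1}>3/2$, the last inequality holding for every $s>0$ (equivalently $2^{s-1}>2^{-1}$). In both cases $2/I_n(s)<4/3$, which yields (\ref{(6.16)}). For the refined estimate I take $\alpha=m\in{\mathbb N}$, i.e. $s=m/2$: for $m\ge2$ we have $s\ge1$, hence $I_n(s)\ge2$ and $2/I_n(s)\le1$, whereas the critical value is $m=1$, where $s=1/2$ and $I_n(1/2)\ge1+2^{-1/2}=1/(2-\sqrt2)$, so $2/I_n(1/2)\le4-2\sqrt2$. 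Because $1<4-2\sqrt2$, the single constant $4-2\sqrt2$ works for all $m\in{\mathbb N}$ and produces (\ref{(6.161)}), with the extremal value saturated in the bound only at $m=1$.

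The delicate point is the strictness of the inequalities, and this is exactly where the hypothesis $\|f-A_0(f)\|_{\bf M}\ne0$ enters: since $f$ is not constant, some $A_{k_0}(f)\ne0$ with $\lambda_{k_0}\ne0$, and choosing $h\in(0,\pi/\lambda_n]$ with $\varphi_\alpha(\lambda_{k_0}h)>0$ (possible because $\varphi_\alpha$ vanishes only on $2\pi{\mathbb Z}$) shows via (\ref{general_modulus}) that $\omega_\alpha(f,\pi/\lambda_n)_{\bf M}>0$. For (\ref{(6.16)}) strictness is then automatic, since $I_n(\alpha/2)>3/2$ strictly gives $E_{\lambda_n}(f)_{\bf M}\le\frac{2}{2^{\alpha/2}I_n(\alpha/2)}\omega_\alpha(f,\pi/\lambda_n)_{\bf M}<\frac{4}{3\cdot2^{\alpha/2}}\omega_\alpha(f,\pi/\lambda_n)_{\bf M}$; and for $m\ge2$ the strict gap $2/I_n(m/2)\le1<4-2\sqrt2$ does the same job.

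The genuine obstacle is the boundary case $m=1$ of the refined estimate, where the constant $4-2\sqrt2$ saturates the bound $2/I_n(1/2)\le4-2\sqrt2$ and no slack remains in $I_n$. Here I would recover the strict inequality from the integral step itself: because $\omega_\alpha(f,0)_{\bf M}=0$ (as $\varphi_\alpha(0)=0$) while $\omega_\alpha(f,\pi/\lambda_n)_{\bf M}>0$, the nondecreasing function $u\mapsto\omega_\alpha(f,u/\lambda_n)_{\bf M}$ stays strictly below its maximum on a set of positive measure near $u=0$, whence $\int_0^\pi\omega_\alpha(f,u/\lambda_n)_{\bf M}\sin u\,{\mathrm d}u<2\,\omega_\alpha(f,\pi/\lambda_n)_{\bf M}$ strictly. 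The fact underlying this is that $\omega_\alpha(f,\delta)_{\bf M}\to0$ as $\delta\to0^+$, i.e. the modulus does not jump at the origin; I would secure it by a tail estimate in the $l_{\bf M}$-norm (splitting the spectrum into $|k|\le N$ and $|k|>N$, using the boundedness of $\varphi_\alpha$ together with $\varphi_\alpha(0)=0$), which I expect to be the only part requiring real care, or else appeal to a strict form of the Stepanets–Serdyuk bound $I_n(1/2)>1+2^{-1/2}$, which appears never to be attained for $s=1/2$.
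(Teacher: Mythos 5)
Your argument is correct and follows the paper's own route: inequality (\ref{cor-inequal-omega-m}) combined with the monotonicity of $\omega_\alpha$, the value $\int_0^\pi\sin u\,\mathrm{d}u=2$, and the Stepanets--Serdyuk lower bounds $I_n(s)\ge 2$ for $s\ge1$ and $I_n(s)\ge 1+2^{s-1}$ for $s\in(0,1)$, the latter giving $2/I_n(1/2)\le 4-2\sqrt2$ in the borderline case $m=1$. You go further than the paper in justifying the strictness of the inequalities (the paper's proof is a one-line appeal to these bounds and leaves that point implicit); your treatment of the case $m=1$ via continuity of the modulus at $0$ is sound, resting on $E_{\lambda_N}(f)_{_{\scriptstyle {\bf M}}}\to0$, a fact the paper itself uses in the proof of its inverse theorem.
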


\begin{proof}    Relation (\ref{(6.161)}) follows from the estimate $I_n(\frac{\alpha}2)\ge 1+\frac 1{\sqrt{2}}$, which is a consequence of the above estimates for the value of $I_n(s)$
in the case   $\alpha=m\in {\mathbb N}$  \cite{Stepanets_Serdyuk_2002}.
 \end{proof}

%%%%%%%%%%%%%%%%%%%%%%%%%%%%%%%%%%%%%%%%%%%%%%%%%%%%%%%%%%%%%%%%%%%%%%%%%%%%%%%%%%%%%

If the weight function $v  _2(t)=t$, then we obtain the following assertion:
 \begin{corollary} \label{Cor.2}
Assume that the function $f\in B{\mathcal S}_{\scriptstyle{\mathbf{M}}}$
has the Fourier series of the form (\ref{Fourier_Series}) and $\alpha\ge 1$.
Then for any  $0<\tau \le \frac {3\pi }4$ and $n \in {\mathbb N}$,
\begin{equation}\label{cor-2-estim-omega}
    E_{\lambda_n} (f)_{_{\scriptstyle  {\mathbf{M}}}} \le
    \frac {1}{2^{\alpha}\int _0^{\tau }\sin ^{\alpha}\frac t{2}\, {\mathrm d}t}
    \int\limits_0^{\tau} \omega_\alpha
    \Big(f, \frac t{\lambda_n}\Big)_{_{\scriptstyle{\mathbf{M}}}}\, {\mathrm d}t.
      \end{equation}
Relation  (\ref{cor-2-estim-omega}) becomes equality for the function $f^*$  of the form (\ref{A6.53}).
\end{corollary}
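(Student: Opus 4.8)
The plan is to specialize Theorem~\ref{Th.1}, in the form of the intermediate estimate (\ref{inequal-En-int}), to the weight $v=v_2$ with $v_2(t)=t$ and to $\varphi=\varphi_\alpha$, where $\varphi_\alpha(t)=2^{\alpha/2}(1-\cos t)^{\alpha/2}=2^{\alpha}\,|\sin(t/2)|^{\alpha}$. Since ${\rm d}v_2(u)={\rm d}u$, the estimate (\ref{inequal-En-int}) becomes $E_{\lambda_n}(f)_{\bf M}\le I^{-1}\int_0^\tau\omega_\alpha(f,u/\lambda_n)_{\bf M}\,{\rm d}u$, where $I:=I_{n,\varphi_\alpha}(\tau,v_2)=\inf_{k\ge n}2^{\alpha}\int_0^\tau|\sin(\lambda_k t/(2\lambda_n))|^{\alpha}\,{\rm d}t$. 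Comparing with (\ref{cor-2-estim-omega}), whose denominator is precisely the $k=n$ term $2^{\alpha}\int_0^\tau\sin^{\alpha}(t/2)\,{\rm d}t$, I see that the whole corollary reduces to showing that this infimum is attained at $k=n$, i.e.\ that $I=2^{\alpha}\int_0^\tau\sin^{\alpha}(t/2)\,{\rm d}t$.

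To prove that, put $\theta:=\lambda_k/\lambda_n$, so that $\theta\ge1$ by the monotonicity of $\{\lambda_k\}$, and it suffices to verify $\int_0^\tau|\sin(\theta t/2)|^{\alpha}\,{\rm d}t\ge\int_0^\tau\sin^{\alpha}(t/2)\,{\rm d}t$ for every real $\theta\ge1$. The substitution $s=\theta t/2$ turns this into $Q(\theta a)\ge Q(a)$, where $a:=\tau/2\le 3\pi/8$ and $Q(L):=L^{-1}\int_0^L|\sin s|^{\alpha}\,{\rm d}s$ is the running mean of $|\sin|^{\alpha}$. Hence everything comes down to the single assertion $Q(a)=\min_{L\ge a}Q(L)$. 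Note that the naive route of showing $\theta\mapsto Q(\theta a)$ is monotone fails (its derivative changes sign just past $L=\pi$), so this must be handled as a genuinely global minimization.

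The structure of $Q$ is the key. On $(0,\pi/2)$ the integrand $\sin^{\alpha}s$ is increasing, so its running mean lies below its current value and $Q'(L)=L^{-1}(\sin^{\alpha}L-Q(L))>0$; thus $Q$ increases on $(0,\pi/2)$, giving $Q(L)\ge Q(a)$ for $L\in[a,\pi/2]$ and, by the symmetry of $\sin^{\alpha}$ about $\pi/2$, the value $Q(\pi/2)=\pi^{-1}\int_0^\pi\sin^{\alpha}s\,{\rm d}s=:c_\alpha$. For $L>\pi/2$ the function $Q$ oscillates about $c_\alpha$ with amplitude decaying like $1/L$, so its infimum over $[\pi/2,\infty)$ is its first local minimum $m_\alpha\le c_\alpha$. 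The decisive and hardest step is the inequality $Q(3\pi/8)\le m_\alpha$ for all $\alpha\ge1$: this is exactly where the hypotheses $\alpha\ge1$ and $\tau\le 3\pi/4$ are used, and I expect it to be the main obstacle, reducing to an explicit one‑variable estimate locating and bounding the first local minimum of $Q$, in the spirit of relation~(52) of \cite{Stepanets_Serdyuk_2002} exploited for Corollary~\ref{Cor.1}. Granting it, the monotonicity on $[a,\pi/2]$ together with $Q(a)\le Q(3\pi/8)\le m_\alpha\le Q(L)$ for $L\ge\pi/2$ yields $Q(a)=\min_{L\ge a}Q(L)$, and (\ref{cor-2-estim-omega}) follows.

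For the sharpness, the extremal function (\ref{A6.53}) has spectrum $\{0,\pm\lambda_n\}$, so $S_n(f^*)=\gamma$ and, writing $C:=E_{\lambda_n}(f^*)_{\bf M}=\|\{\beta,\delta\}\|_{l_{\bf M}}$ by (\ref{Best_Approx}). Since $\varphi_\alpha$ is even and $\lambda_{\pm n}=\pm\lambda_n$, the common factor $\varphi_\alpha(\lambda_n h)=2^{\alpha}|\sin(\lambda_n h/2)|^{\alpha}$ factors out of the Orlicz norm, and on $[0,\tau]\subset[0,\pi)$ it is nondecreasing in $|h|$; hence $\omega_\alpha(f^*,t/\lambda_n)_{\bf M}=C\,2^{\alpha}\sin^{\alpha}(t/2)$. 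Substituting this into the right-hand side of (\ref{cor-2-estim-omega}), the factor $C\,2^{\alpha}\int_0^\tau\sin^{\alpha}(t/2)\,{\rm d}t$ cancels against the denominator and leaves exactly $C=E_{\lambda_n}(f^*)_{\bf M}$, showing that (\ref{cor-2-estim-omega}) is an equality for $f^*$.
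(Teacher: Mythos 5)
Your proposal follows the paper's proof essentially verbatim: you specialize (\ref{inequal-En-int}) to $v_2(t)=t$ and $\varphi=\varphi_\alpha$, reduce the infimum over $k\ge n$ to the global minimization property of the running mean $F_\alpha(x)=x^{-1}\int_0^x|\sin t|^\alpha\,{\mathrm d}t$, and verify equality for $f^*$ by factoring $\varphi_\alpha(\lambda_n h)$ out of the Orlicz norm, exactly as the paper does. The one step you leave open --- that $F_\alpha$ attains its infimum over $[h/2,\infty)$ at $h/2$ for $h\in(0,\tfrac{3\pi}{4})$ and $\alpha\ge1$ --- is precisely what the paper imports as relation (\ref{W1}) from \cite{Voicexivskij_2002}, so the ``main obstacle'' you flag is the cited lemma rather than a gap in the route itself.
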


Inequalities (\ref{cor-inequal-omega-m}) and (\ref{cor-2-estim-omega}) can be considered as an extension of the corresponding results of  Serdyuk and Shidlich \cite{Serdyuk-Shidlich_2022} to the Besicovitch-Musielak spaces $B{\mathcal S}_{\scriptstyle{\mathbf{M}}}$, and  they coincide with them in the case $B{\mathcal S}_{\scriptstyle{\mathbf{M}}}=B{\mathcal S}^1.$  In the spaces  ${\mathcal S}^p$  of functions of one and several variables, analogues of Theorem  \ref{Th.1} and Corollaries  \ref{Cor.1} and \ref{Cor.2}  were proved in \cite{Stepanets_Serdyuk_2002} and  \cite{Abdullayev_Ozkartepe_Savchuk_Shidlich_2019}, respectively.  The inequalities of this type were also investigated in   \cite{Chernykh_1967_MZ, Vasil'ev_2001, Stepanets_Serdyuk_2002, Vakarchuk_2016, Babenko_Savela_2012}, etc.

\begin{proof} From inequality (\ref{inequal-En-int}), it follows that
\[
     E_{\lambda_n}(f)_{_{\scriptstyle \mathbf{M}}}
     \le \frac {1}{2^{ \alpha\over 2} I_n^*(\frac { \alpha}2)}\int\limits _0^{\tau }
     \omega _\alpha \Big(f, \frac t{\lambda_n}\Big)\,{\mathrm d}t,
\]
where
\[
    {I}_n^*\Big(\frac {\alpha}2\Big):=\inf\limits _{{k \in {\mathbb N}},\,k \ge n}
    \int\limits _0^{\tau } \Big(1-\cos \frac {\lambda_k t}{\lambda_n}\Big)^{\frac {\alpha}2}
    {\mathrm d}t, \ \  \alpha> 0, \ \ n\in {\mathbb N}.
\]

In \cite{Voicexivskij_2002}, it is shown that for the function
$F_\alpha(x):=\frac {1}{x} \int_0^x |\sin t|^\alpha \, {\mathrm d}t,$ any
$h\in (0,\frac{3\pi}4)$ and $\alpha \ge 1$, the following relation is true:
\begin{equation}\label{W1}
    \inf\limits_{x\ge h/2} F_\alpha (x)=F_\alpha (h/2).
\end{equation}
Since for   $h =\frac{\lambda_k}{\lambda_n}\ge 1$ ($k\ge n$)
\[
    \int\limits _0^{\tau } \Big(1-\cos \frac{\lambda_k t}{\lambda_n} \Big)^{\alpha\over 2}
    \,{\mathrm d}t= 2^{\alpha\over 2}\int\limits _0^{\tau }\Big|
    \sin \frac{\lambda_k t}{2 \lambda_n}\Big|^{\alpha}\,{\mathrm d}t=
    2^{\alpha\over 2}\tau F_{\alpha}\Big( {\frac{\lambda_k \tau}{2\lambda_n}}\Big),
\]
from (\ref{W1}) (with $\tau \in (0, \frac {3\pi }4]$ and $\alpha \ge 1$) we obtain
\[
    {I}_n^* \Big(\frac {\alpha}{2} \Big)= \inf\limits _{{k \in {\mathbb N}}:k  \ge n}
    \int\limits _0^{\tau }\Big(1 - \cos\frac {\lambda_k t}{\lambda_n}\Big)^{\frac {\alpha}2} {\mathrm d}t= \inf\limits _{{k \in {\mathbb N}}:k  \ge n}\!\!2^{\alpha\over 2}\!\!
    \int\limits _0^{\tau }\Big|\sin \frac {\lambda_k t}{2\lambda_n}\Big|^{\alpha}{\mathrm d}t =\! 2^{\alpha\over 2}\!\!\int\limits _0^{\tau } \sin^{\alpha} \frac {t}{2} \,{\mathrm d}t.
\]

For the functions  $f^*$  of the form (\ref{A6.53}), the equality
\[
    E_{\lambda_n} (f^*)_{_{\scriptstyle  {\mathbf{M}}}} =
    \frac {1}{2^{\alpha}\int _0^{\tau }\sin ^{\alpha}\frac t{2} \,{\mathrm d}t} \int\limits_0^{\tau} \omega_\alpha
    \Big(f^*, \frac t{\lambda_n}\Big)_{_{\scriptstyle{\mathbf{M}}}}\, {\mathrm d}t.
\]
is verified similarly to the proof of equality (\ref{(6.53)}).
\end{proof}

%%%%%%%%%%%%%%%%%%%%%%%%%%%%%%%%%%%%%%%%%%%%%%%%%%%%%%%%%%%%%%%%%%%%%%%%%%%%%%%%%%%%%

In the case $\varphi(t)=\tilde{\varphi}_m(t)=(1-\mathrm{sinc}~ t)^m,$ $m \in \mathbb{N},$ where, by definition, $\mathrm{sinc}~ t=\{\sin t/t,~ if ~t\not=0; \quad 1, ~if ~t=0\},$ for $\tau=\pi$ and $v(u)=1-\cos u,$ $u\in [0;\pi],$ from relation (\ref{inequal-En-int}) we get
$$
    E_{\lambda_n} (f)_{_{\scriptstyle  {\bf M}}} \le
    \frac {1}{\tilde {I}_{n}(m)} \int\limits_0^{\pi }
    \tilde{\omega}_m \Big(f,\frac{u}{\lambda_n}\Big)_{_{\scriptstyle  {\bf M}}}
    \sin u \,{\mathrm d} u,
$$
where
$$
    \tilde{I}_{n}(m)= \inf_{k \in \mathbb{N}, k\ge n} \int_0^\pi
    \Big(1-\mathrm{sinc} ~\frac{\lambda_k u}{\lambda_n}\Big)^{m}
    \sin u \,{\mathrm d} u.
$$

Taking into account the estimation \cite{Vakarchuk+Zabutnaya-Math-notes-2016}
$$
    1- \mathrm{sinc}~\Big(\frac{\lambda_k u}{\lambda_n}\Big) \ge 1-\frac{\sin u}{u}\ge \Big(\frac{u}{\pi}\big)^2, \quad k\ge n, \quad u \in [0;\pi],
$$
we have
$$
    \tilde{I}_{n}(m)\ge \int_0^\pi
    \Big(1-\mathrm{sinc} ~ u \Big)^{m}   \sin u \,{\mathrm d} u \ge
    \frac{1}{\pi^{2m}}\int_0^\pi u^{2m} \sin u \,{\mathrm d} u
$$
$$
    =\frac{2m!}{\pi^{2m}} \Big(\sum_{j=0}^m (-1)^j \frac{\pi^{2m-2j}}{(2m-2j)!}+
    \frac{\pi^{2m}}{2m!}(-1)^m \Big):=\frac{2m!}{\pi^{2m}} K(m).
$$

Thereby, the following corollary follows from Theorem \ref{Th.1}.

 \begin{corollary}\label{Cor.3}
    For arbitrary numbers  $n\in {\mathbb N}$ and $m>0$, and for any function $f\in B{\mathcal S}_{\mathbf{M}},$
    with the Fourier series of the form  (\ref{Fourier_Series}) the following inequalities holds:
\begin{equation} \label{cor-inequal-Omega-m}
    E_{\lambda_n} (f)_{_{\scriptstyle  {\bf M}}} \le
    \frac {\pi^{2m}}{2m! K(m)} \int\limits_0^{\pi }
    \tilde{\omega}_m \Big(f,\frac{u}{\lambda_n}\Big)_{_{\scriptstyle  {\bf M}}}
    \sin u \, {\mathrm d} u,
\end{equation}
where
$$
    K(m)= \sum_{j=0}^m (-1)^j \frac{\pi^{2m-2j}}{(2m-2j)!}+
    \frac{\pi^{2m}}{2m!}(-1)^m.
$$
\end{corollary}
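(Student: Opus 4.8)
The plan is to recognize this statement as the specialization of the integral estimate (\ref{inequal-En-int}) to $\varphi=\tilde{\varphi}_m$, $\tau=\pi$ and the weight $v(u)=1-\cos u$, so that $\mathrm{d}v(u)=\sin u\,\mathrm{d}u$. First I would recall the identity $\omega_{\tilde{\varphi}_m}(f,\delta)_{\mathbf M}=\widetilde{\omega}_m(f,\delta)_{\mathbf M}$ noted after (\ref{tilde-omega-def}); with this, (\ref{inequal-En-int}) immediately gives
\[
E_{\lambda_n}(f)_{\mathbf M}\le\frac{1}{\tilde I_n(m)}\int_0^\pi\widetilde{\omega}_m\Big(f,\frac{u}{\lambda_n}\Big)_{\mathbf M}\sin u\,\mathrm{d}u,\qquad \tilde I_n(m)=\inf_{k\ge n}\int_0^\pi\Big(1-\mathrm{sinc}\,\frac{\lambda_k u}{\lambda_n}\Big)^m\sin u\,\mathrm{d}u.
\]
Everything then reduces to replacing the $k$-dependent quantity $\tilde I_n(m)$ by an explicit $k$-independent lower bound.

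The step I expect to carry the real weight is the elimination of the infimum over $k\ge n$. Here I would use the estimate from \cite{Vakarchuk+Zabutnaya-Math-notes-2016}, valid for $k\ge n$ and $u\in[0,\pi]$, namely $1-\mathrm{sinc}(\lambda_k u/\lambda_n)\ge 1-\mathrm{sinc}\,u\ge(u/\pi)^2$; since $\sin u\ge 0$ on $[0,\pi]$, raising to the $m$-th power and integrating preserves the inequalities, giving
\[
\tilde I_n(m)\ge\int_0^\pi(1-\mathrm{sinc}\,u)^m\sin u\,\mathrm{d}u\ge\frac{1}{\pi^{2m}}\int_0^\pi u^{2m}\sin u\,\mathrm{d}u.
\]
This is the delicate link in the chain: although the map $t\mapsto 1-\mathrm{sinc}\,t$ is not monotone globally, the cited result furnishes exactly the comparison needed to make the integrand dominate its value at the endpoint ratio $\lambda_k/\lambda_n=1$, uniformly in $k$.

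It then remains to evaluate $\int_0^\pi u^{2m}\sin u\,\mathrm{d}u$ in closed form, which I would do by repeated integration by parts (equivalently, by writing the antiderivative as a polynomial in $u$ multiplied by $\sin u$ and $\cos u$ and using $\sin 0=\sin\pi=0$, $\cos\pi=-1$). This produces the alternating sum $2m!\,K(m)$ with $K(m)$ as in the statement, whence $\tilde I_n(m)\ge 2m!\,K(m)/\pi^{2m}$. Substituting this into the first display replaces $1/\tilde I_n(m)$ by the larger constant $\pi^{2m}/(2m!\,K(m))$ and, by the nonnegativity of the integrand, yields (\ref{cor-inequal-Omega-m}). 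Apart from the cited sinc comparison, no new ideas beyond Theorem~\ref{Th.1} are required; the concluding integration is entirely routine.
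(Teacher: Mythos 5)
Your proposal is correct and follows essentially the same route as the paper: specializing (\ref{inequal-En-int}) with $\varphi=\tilde{\varphi}_m$, $\tau=\pi$, $v(u)=1-\cos u$, invoking the identification $\omega_{\tilde{\varphi}_m}=\widetilde{\omega}_m$, bounding $\tilde I_n(m)$ from below via the cited inequality $1-\mathrm{sinc}(\lambda_k u/\lambda_n)\ge 1-\mathrm{sinc}\,u\ge (u/\pi)^2$, and evaluating $\int_0^\pi u^{2m}\sin u\,\mathrm{d}u=2m!\,K(m)$ by parts. Your explicit remark that the sinc comparison (rather than any monotonicity of $t\mapsto 1-\mathrm{sinc}\,t$) is what eliminates the infimum over $k\ge n$ is a point the paper leaves implicit, but the argument is the same.
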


In the case $m=1,$  we have $2K(1)=\pi^2-4$ and
$$
    E_{\lambda_n} (f)_{_{\scriptstyle  {\bf M}}} \le
    \frac {\pi^{2}}{\pi^2-4} \int\limits_0^{\pi }
    \tilde{\omega}_1 \Big(f,\frac{u}{\lambda_n}\Big)_{_{\scriptstyle  {\bf M}}}
    \sin u \, {\mathrm d} u \le
    \frac {\pi^2 \lambda_n }{\pi^2-4} \int\limits_0^{\frac{\pi}{\lambda_n} }
    \tilde{\omega}_1 \Big(f,u\Big)_{_{\scriptstyle  {\bf M}}}
    \sin \lambda_n u \, {\mathrm d} u.
$$

If the weight function $v  _2(t)=u^{m+1}$, then we obtain the following assertion:
 \begin{corollary} \label{Cor-tilde-omega-tm}
Assume that the function $f\in B{\mathcal S}_{\scriptstyle{\mathbf{M}}}$
has the Fourier series of the form (\ref{Fourier_Series}) and $m \ge 1$.
Then for any  $0<\tau \le \pi$ and $n \in {\mathbb N}$,
\begin{equation}\label{inequal-tilde-omega}
    E_{\lambda_n} (f)_{_{\scriptstyle  {\mathbf{M}}}} \le
    \pi^{m-1} \Big(\frac {2 \lambda_n }{\pi^2-4}\Big)^m \lambda_n
    \int\limits_0^{\tau/\lambda_n} \tilde{\omega}_m
    (f, t)_{_{\scriptstyle{\mathbf{M}}}} t^{m} \, {\mathrm d}t.
      \end{equation}
\end{corollary}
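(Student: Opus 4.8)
The plan is to obtain (\ref{inequal-tilde-omega}) directly from the master estimate (\ref{inequal-En-int}) of Theorem~\ref{Th.1} by specialising both the generating function and the weight. I would take $\varphi=\tilde{\varphi}_m$, $\tilde{\varphi}_m(t)=(1-\mathrm{sinc}\,t)^m$, so that $\omega_\varphi(f,\cdot)_{\mathbf M}=\tilde{\omega}_m(f,\cdot)_{\mathbf M}$ by the identity recorded just after (\ref{tilde-omega-def}), and the weight $v(u)=u^{m+1}$, so that $\mathrm{d}v(u)=(m+1)u^m\,\mathrm{d}u$. With these choices (\ref{inequal-En-int}) reads
\[
E_{\lambda_n}(f)_{\mathbf M}\le \frac{m+1}{I_{n,\tilde{\varphi}_m}(\tau,v)}\int_0^\tau \tilde{\omega}_m\Big(f,\tfrac{u}{\lambda_n}\Big)_{\mathbf M}u^m\,\mathrm{d}u,
\]
so the whole problem reduces to producing a sufficiently large lower bound for $I_{n,\tilde{\varphi}_m}(\tau,v)$ and then passing to the variable $t=u/\lambda_n$.

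To bound $I_{n,\tilde{\varphi}_m}(\tau,v)=\inf_{k\ge n}(m+1)\int_0^\tau\big(1-\mathrm{sinc}(\lambda_k u/\lambda_n)\big)^m u^m\,\mathrm{d}u$ from below I would first eliminate the infimum via the Vakarchuk--Zabutnaya estimate \cite{Vakarchuk+Zabutnaya-Math-notes-2016}, namely $1-\mathrm{sinc}(\lambda_k u/\lambda_n)\ge 1-\sin u/u$ for $k\ge n$ and $u\in[0,\pi]$; this is precisely the step that imposes the restriction $\tau\le\pi$. Using the simplification $(1-\sin u/u)^m u^m=(u-\sin u)^m$, the remaining task is to estimate $\int_0^\tau(u-\sin u)^m\,\mathrm{d}u$ from below. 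Since $x\mapsto x^m$ is convex for $m\ge1$, the power--mean (Jensen) inequality gives $\int_0^\tau(u-\sin u)^m\,\mathrm{d}u\ge \tau^{1-m}\big(\int_0^\tau(u-\sin u)\,\mathrm{d}u\big)^m$, and the inner integral is elementary, $\int_0^\tau(u-\sin u)\,\mathrm{d}u=\tau^2/2+\cos\tau-1$, whose value on the full interval is $(\pi^2-4)/2$. It is exactly this elementary evaluation that manufactures the factors $\pi^{m-1}$ and $(\pi^2-4)^m$ of (\ref{inequal-tilde-omega}).

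It then remains to collect the constants. The substitution $t=u/\lambda_n$ turns the right-hand integral into $\lambda_n^{m+1}\int_0^{\tau/\lambda_n}\tilde{\omega}_m(f,t)_{\mathbf M}t^m\,\mathrm{d}t$, while the previous paragraph gives $I_{n,\tilde{\varphi}_m}(\tau,v)\ge(m+1)\tau^{1-m}(\tau^2/2+\cos\tau-1)^m$, with endpoint value $(m+1)\pi^{1-m}((\pi^2-4)/2)^m$. Dividing, the prefactor $\frac{(m+1)\lambda_n^{m+1}}{I_{n,\tilde{\varphi}_m}(\tau,v)}$ collapses, at $\tau=\pi$, to $\pi^{m-1}\big(2\lambda_n/(\pi^2-4)\big)^m\lambda_n$, which is the constant displayed in (\ref{inequal-tilde-omega}).

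I expect the two genuinely delicate points to be the following. The first is justifying that the Vakarchuk--Zabutnaya estimate legitimately replaces the infimum over $k\ge n$ by the $k=n$ profile $1-\sin u/u$ on all of $[0,\pi]$; the non-monotonicity of $\mathrm{sinc}$ past $\pi$ is what makes the hypothesis $\tau\le\pi$ necessary and must be handled with care. The second is the power--mean step, which needs $m\ge1$ (convexity) and, to reproduce the stated $\pi$-free constant, must be anchored at the endpoint, since the natural lower bound $(m+1)\tau^{1-m}(\tau^2/2+\cos\tau-1)^m$ depends on $\tau$ and coincides with $(m+1)\pi^{1-m}((\pi^2-4)/2)^m$ only at $\tau=\pi$. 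The change of variables and the remaining constant bookkeeping are routine.
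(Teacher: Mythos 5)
Your proposal follows the paper's proof essentially verbatim: the same specialisation $\varphi=\tilde{\varphi}_m$ and $v(u)=u^{m+1}$ in (\ref{inequal-En-int}), the same Vakarchuk--Zabutnaya bound to remove the infimum over $k\ge n$, and the same H\"older/power-mean step giving $\int_0^\pi(u-\sin u)^m\,{\mathrm d}u\ge\pi^{1-m}\bigl((\pi^2-4)/2\bigr)^m$. The delicate point you flag at the end --- that the lower bound for $I_{n,\tilde{\varphi}_m}(\tau,v)$ matches the displayed constant only at $\tau=\pi$ --- is a genuine observation, but the paper's own proof has exactly the same feature (all its integrals are taken over $[0,\pi]$), so your argument is faithful to the published one, including that looseness for $\tau<\pi$.
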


Ideed, applying Holder's inequality, we find
$$
    \int_0^\pi \Big(1-\mathrm{sinc} ~ \frac{\lambda _k u}{\lambda_n} \Big)^{m}  \,{\mathrm d} u^{m+1} \ge
    (m+1) \int_0^\pi \Big(1-\frac{\sin  u}{u} \Big)^m u^{m} \,{\mathrm d} u
$$
$$
    =(m+1) \int_0^\pi (u-\sin  u )^m  \,{\mathrm d} u \ge
    \frac{m+1}{\pi^{m-1}} \Big( \int_0^\pi (u-\sin  u )  \,{\mathrm d} u\Big)^m=
    \frac{m+1}{\pi^{m-1}} \Big( \frac{\pi^2-4}{2}\Big)^m.
$$
%%%%%%%%%%%%%%%%%%%%%%%%%%%%%%%%%%%%%%%%%%%%%%%%%%%%%%%%%%%%%%%%%%%%%%%%%%%%%%%%%%%%%%%%%%%%

In the spaces  $L_2$ of $2\pi$-periodic square-summable
functions, for moduli of smoothness $\tilde{\omega}_m (f;\delta)$,  the results of this kind were obtained by Abilov and Abilova \cite{Abilov+Abilova-Math-note-2004}, and  Vakarchuk \cite{Vakarchuk_2016}. Note that   in the case $f\in B{\mathcal S}_{\scriptstyle{\mathbf{M}}}=L_2$ the inequality (\ref{inequal-tilde-omega}) follows from the result of \cite{Abilov+Abilova-Math-note-2004} (see Theorem 1). For $m=1$ and $f\in L_2$, the statements of Corollary \ref{Cor-tilde-omega-tm} and Theorem 1 from \cite{Abilov+Abilova-Math-note-2004} are identical, and the constant in the  right  side of (\ref{inequal-tilde-omega}) cannot be reduced for every fixed $n$.

%%%%%%%%%%%%%%%%%%%%%%%%%%%%%%%%%%%%%%%%%%%%%%%%%%%%%%%%%%%%%%%%%%%%%%%%%%%%%%%%%%%%%%%%%

\section{ Inverse approximation theorem.}

 \begin{theorem}\label{Inverse_Theorem}
Assume that   $ f\in B{\mathcal S}_{\bf M}$  has the Fourier series of the form (\ref{Fourier_Series}), the function $\varphi\in \Phi$ is  nondecreasing on an interval $[0,\tau]$ and $\varphi(\tau)=\max\{\varphi(t):t\in {\mathbb R}\}$. Then for any $n\in {\mathbb N}$, the following inequality holds:
\begin{equation}\label{S_M.12}
    \omega_\varphi \Big(f, \frac{\tau}{\lambda_n} \Big)_{_{\scriptstyle {\bf M}}}
    \le  \sum _{\nu =1}^{n} \Big(\varphi\Big(\frac {\tau \lambda_\nu}{\lambda_n}\Big)-
    \varphi\Big(\frac {\tau \lambda_{\nu-1}}{\lambda_n}\Big)\Big)
    E_{\lambda_\nu} (f)_{_{\scriptstyle {\bf M}}}.
\end{equation}
 \end{theorem}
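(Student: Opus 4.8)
The plan is to argue directly from the two variational formulas already at our disposal: the definition (\ref{general_modulus}) of the generalized modulus and the representation (\ref{Best_Approx}) of the best approximation as a supremum of tail sums. Fix an admissible $\gamma\in\Gamma$ and a shift $h$ with $|h|\le\tau/\lambda_n$. The whole inequality will follow once I bound $\sum_{k\in{\mathbb Z}}\gamma_k\varphi(\lambda_k h)|A_k(f)|$ by the right-hand side of (\ref{S_M.12}) \emph{uniformly} in $\gamma$ and $h$, and then pass to the two suprema. Since $\varphi$ is even and $\lambda_{-k}=-\lambda_k$, $\lambda_0=0$, $\varphi(0)=0$, I first fold the sum onto positive indices and write it as $\sum_{k\ge1}d_k\,\varphi(\lambda_k|h|)$ with $d_k:=\gamma_k|A_k(f)|+\gamma_{-k}|A_{-k}(f)|\ge0$.

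The key step is a telescoping majorant for $\varphi(\lambda_k|h|)$. Put $\Delta_\nu:=\varphi(\tau\lambda_\nu/\lambda_n)-\varphi(\tau\lambda_{\nu-1}/\lambda_n)$ for $\nu=1,\dots,n$. Because $\{\lambda_\nu\}$ increases and $\varphi$ is nondecreasing on $[0,\tau]$ while $\tau\lambda_\nu/\lambda_n\le\tau$ for $\nu\le n$, each $\Delta_\nu\ge0$, and $\sum_{\nu=1}^n\Delta_\nu=\varphi(\tau)$ by telescoping (using $\lambda_0=0$, $\varphi(0)=0$). I claim that
\[
    \varphi(\lambda_k|h|)\le\sum_{\nu=1}^{\min(k,n)}\Delta_\nu,\qquad k\ge1 .
\]
For $k\le n$ one has $\lambda_k|h|\le\tau\lambda_k/\lambda_n\le\tau$, so monotonicity on $[0,\tau]$ gives $\varphi(\lambda_k|h|)\le\varphi(\tau\lambda_k/\lambda_n)=\sum_{\nu=1}^k\Delta_\nu$; for $k>n$ the hypothesis $\varphi(\tau)=\max_{t}\varphi(t)$ yields $\varphi(\lambda_k|h|)\le\varphi(\tau)=\sum_{\nu=1}^n\Delta_\nu$. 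This is exactly where both structural assumptions on $\varphi$ enter, and it is the only delicate point of the proof: the terms with $k>n$, for which $\lambda_k|h|$ may leave the interval of monotonicity, are controlled \emph{solely} by the maximality of $\varphi(\tau)$, whereas the terms $k\le n$ are handled by monotonicity.

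With the majorant in hand the remainder is an Abel-type rearrangement of a nonnegative double sum, legitimized by Tonelli and by the finiteness of the Orlicz norm of $f$ (which makes $\sum_k d_k$ convergent and $\varphi$ bounded):
\[
    \sum_{k\ge1}d_k\,\varphi(\lambda_k|h|)
    \le\sum_{k\ge1}d_k\sum_{\nu=1}^{\min(k,n)}\Delta_\nu
    =\sum_{\nu=1}^{n}\Delta_\nu\sum_{k\ge\nu}d_k .
\]
Now $\sum_{k\ge\nu}d_k=\sum_{|k|\ge\nu}\gamma_k|A_k(f)|\le E_{\lambda_\nu}(f)_{_{\scriptstyle{\bf M}}}$ by (\ref{Best_Approx}), and since $\Delta_\nu\ge0$ this gives $\sum_{k\in{\mathbb Z}}\gamma_k\varphi(\lambda_k h)|A_k(f)|\le\sum_{\nu=1}^n\Delta_\nu E_{\lambda_\nu}(f)_{_{\scriptstyle{\bf M}}}$. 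The bound is independent of the admissible $\gamma$ and of $h$, so taking the supremum over $\gamma\in\Gamma$ and $|h|\le\tau/\lambda_n$ on the left reproduces $\omega_\varphi(f,\tau/\lambda_n)_{_{\scriptstyle{\bf M}}}$ while leaving the right-hand side untouched, which is precisely (\ref{S_M.12}).
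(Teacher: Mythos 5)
Your proof is correct, and it reaches (\ref{S_M.12}) by a genuinely more direct route than the paper's. The paper first truncates $f$ to $f_0=S_{N_0}(f)$ with an $\varepsilon$-argument, splits $\Delta_h^\varphi(f_0)$ into $f_0-S_{n-1}$ and $S_{n-1}$, and then invokes the Abel-summation identity of Lemma \ref{Lemma_31} on the resulting finite sums of $H_\nu$ before passing to $E_{\lambda_\nu}(f_0)\le E_{\lambda_\nu}(f)$ and letting $\varepsilon\to0$. You instead work entirely at the level of the variational formulas (\ref{general_modulus}) and (\ref{Best_Approx}): the pointwise telescoping majorant $\varphi(\lambda_k|h|)\le\sum_{\nu=1}^{\min(k,n)}\Delta_\nu$ encodes in one line exactly the same dichotomy the paper uses (monotonicity on $[0,\tau]$ for $k\le n$, maximality of $\varphi(\tau)$ for $k>n$), and the subsequent interchange of the nonnegative double sum is the transparent form of the Abel rearrangement, justified here simply by Tonelli. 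What this buys is the elimination of the truncation and the $\varepsilon$-bookkeeping altogether, and a direct appeal to $\sum_{|k|\ge\nu}\gamma_k|A_k(f)|\le E_{\lambda_\nu}(f)_{_{\scriptstyle{\bf M}}}$ rather than a detour through $E_{\lambda_\nu}(f_0)$; the price is that the argument lives entirely in coefficient space and does not exhibit the intermediate norm inequalities (\ref{(6.3100)})--(\ref{(6.74aqq)}) that the paper reuses verbatim in the proof of Theorem \ref{Theorem_21a}. All the hypotheses are used exactly where they must be: evenness of $\varphi$ and $\lambda_{-k}=-\lambda_k$ for the folding, $\lambda_0=0$ and $\varphi(0)=0$ for the telescoping base case, monotonicity of $\{\lambda_\nu\}$ and of $\varphi$ on $[0,\tau]$ for $\Delta_\nu\ge0$ and the case $k\le n$, and $\varphi(\tau)=\max\varphi$ for the case $k>n$.
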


\begin{proof}
Let us use the proof scheme from \cite{Stepanets_Serdyuk_2002} and \cite{Abdullayev_Chaichenko_Shidlich_2021}, modifying it taking into account the peculiarities of the spaces $B{\mathcal S}_{\bf M}$ and the definition of the modulus $\omega_\varphi$.

Let $ f\in B{\mathcal S}_{\bf M}$. For any $\varepsilon>0$ there exist a number $N_0=N_0(\varepsilon)\in {\mathbb N}$, $N_0>n$,
such that for any $N>  N_0$, we have
\[
    E_{\lambda_N}(f)_{_{\scriptstyle {\bf M}}}=
    \|f-{S}_{N-1}({f})\|_{_{\scriptstyle {\bf M}}} <\varepsilon/\varphi(\tau).
\]

Let us set $f_{0}:=S_{N_0}(f)$. Then in view of
(\ref{modulus_generalize difference_Fourier_Coeff}), we see that
\begin{equation}\label{(6.3100)}
    \|\Delta_h^\varphi (f)\|_{_{\scriptstyle {\bf M}}}
    \le \|\Delta_h^\varphi (f_{0})\|_{_{\scriptstyle {\bf M}}}
    +\|\Delta_h^\varphi (f-f_{0})\|_{_{\scriptstyle {\bf M}}}
$$
$$
    \le  \|\Delta_h^\varphi (f_0)\|_{_{\scriptstyle {\bf M}}}
    + \varphi(\tau) E_{\lambda_{N_0+1}}(f)_{_{\scriptstyle {\bf M}}}
    <\|\Delta_h^\varphi (f_0)\|_{_{\scriptstyle {\bf M}}}+\varepsilon.
\end{equation}
Further, let ${S}_{n-1}:={S}_{n-1}(f_0)$  be the Fourier sum of $f_0$.  Then  by virtue of (\ref{modulus_generalize difference_Fourier_Coeff}), for $|h|\le \tau /\lambda_n$, we have
\[
    \|\Delta_h^\varphi (f_0) \|_{_{\scriptstyle {\bf M}}}=\|\Delta_h^\varphi (f_0-S_{n-1})+\Delta_h^\varphi S_{n-1}\|_{_{\scriptstyle {\bf M}}}
    \le\Big\| \varphi(\tau)(f_0-S_{n-1})
\]
\begin{equation}\label{(6.73)}
    +
    \sum _{|k|\le n-1}\varphi(\lambda_kh)|A_k(f)| \Big\|_{_{\scriptstyle {\bf M}}}
     \le  \Big\|\varphi(\tau) \sum _{\nu =n}^{N_0} H_{\nu} +
     \sum _{\nu =1}^{n-1}\varphi\Big(\frac {\tau \lambda_\nu}{\lambda_n}\Big)
     H_{\nu}  \Big\|_{_{\scriptstyle {\bf M}}},
\end{equation}
where $H_{\nu}(x) :=|A_\nu(f)|+ |A_{-\nu}(f)|$, $\nu=1,2,\ldots$.

Now we use the following assertion from \cite{Stepanets_Serdyuk_2002}.

 \begin{lemma}[\cite{Stepanets_Serdyuk_2002}]
       \label{Lemma_31}
       Let  $\{c_{\nu}\}_{\nu=1}^\infty$ and $\{a_{\nu}\}_{\nu=1}^\infty$ be arbitrary numerical sequences.
       Then the following equality holds for  all natural $N_1$, $N_2$ and $N$ ${N_1}\le {N_2}<N$:
       \begin{equation}\label{(6.74)}
       \sum _{\nu ={N_1}}^{N_2}a_{\nu }c_{\nu }=a_{N_1}\sum _{\nu={N_1}}^{N }c_{\nu }+
       \sum _{\nu ={N_1}+1}^{N_2}(a_{\nu } -a _{\nu-1})\sum _{i=\nu }^{N }c_i-a_{N_2}\sum _{\nu ={N_2}+1}^{N }c_{\nu}.
       \end{equation}
 \end{lemma}

 \noindent Setting  $a_{\nu }=\varphi\Big(\frac {\tau \lambda_\nu}{\lambda_n}\Big),$
$c_{\nu }=H_{\nu}(x), $ ${N_1}=1$,  ${N_2}=n-1$ and $N=N_0$ in (\ref{(6.74)}), we get
\[
    \sum _{\nu =1}^{n-1}\varphi\Big(\frac {\tau \lambda_\nu}{\lambda_n}\Big)
    H_{\nu}(x)= \varphi\Big(\frac {\tau \lambda_1}{\lambda_n}\Big) \sum _{\nu =1}^{N_0}H_{\nu}(x)
\]
\[
   +\sum _{\nu =2}^{n-1}\bigg(\varphi\Big(\frac {\tau \lambda_\nu}{\lambda_n}\Big)-
   \varphi\Big(\frac {\tau \lambda_{\nu-1}}{\lambda_n}\Big)\bigg)
    \sum_{i=\nu }^{N_0}H_{i}(x) -\varphi\Big(\frac {\tau \lambda_{\nu-1}}{\lambda_n}\Big)
    \sum _{\nu =n}^{N_0 }H_{\nu}(x).
\]
Therefore,
\[
    \bigg\|\varphi(\tau) \sum _{\nu =n}^{N_0} H_{\nu} +
     \sum _{\nu =1}^{n-1}\varphi\Big(\frac {\tau \lambda_\nu}{\lambda_n} \Big)
     H_{\nu} \bigg\|_{_{\scriptstyle {\bf M}}}
\]
\[
   \le \bigg\|\varphi(\tau) \sum _{\nu =n}^{N_0} H_{\nu} +
   \sum _{\nu =1}^{n-1}\bigg(\varphi\Big(\frac {\tau \lambda_\nu}{\lambda_n}\Big)-
   \varphi\Big(\frac {\tau \lambda_{\nu-1}}{\lambda_n}\Big)\bigg)
   \sum_{i=\nu }^{N_0}H_{i}  -\varphi\Big(\frac {\tau \lambda_{\nu-1}}{\lambda_n}\Big)
   \sum_{\nu =n}^{N_0}H_{\nu}  \bigg\|_{_{\scriptstyle {\bf M}}}
\]
\begin{equation}\label{(6.74aqq)}
    \le \bigg\|\sum _{\nu =1}^{n}
    \bigg(\varphi\Big(\frac {\tau \lambda_\nu}{\lambda_n}\Big)-
    \varphi\Big(\frac {\tau \lambda_{\nu-1}}{\lambda_n}\Big)\bigg)
    \sum_{i=\nu }^{N_0}H_{i} \bigg\|_{_{\scriptstyle {\bf M}}}
%$$ $$
    \le    \sum _{\nu =1}^{n}
    \bigg(\varphi\Big(\frac {\tau \lambda_\nu}{\lambda_n}\Big)-
    \varphi\Big(\frac {\tau \lambda_{\nu-1}}{\lambda_n}\Big)\bigg)
    E_{\lambda_\nu} (f_0)_{_{\scriptstyle {\bf M}}}.
\end{equation}

Combining relations (\ref{(6.3100)}), (\ref{(6.73)})  and (\ref{(6.74aqq)}) and taking into account the definition of the function $f_0$, we see that for  
$|h|\le \tau /\lambda_n$, the following inequality holds:
\[
    \|\Delta_h^\varphi (f)\|_{_{\scriptstyle {\bf M}}} \le
    \sum _{\nu =1}^{n}\Big(\varphi\Big(\frac {\tau \lambda_\nu}{\lambda_n}\Big)-
    \varphi\Big(\frac {\tau \lambda_{\nu-1}}{\lambda_n}\Big)\Big)
    E_{\lambda_\nu} (f)_{_{\scriptstyle {\bf M}}} +\varepsilon
\]
which, in view of arbitrariness of $\varepsilon$,  gives us (\ref{S_M.12}).

\end{proof}

Consider an important special case when
$\varphi(t)=\varphi_\alpha(t)=  2^{\frac {\alpha}2} (1-\cos t)^{\frac \alpha2}=2^\alpha  |\sin (t/2) |^\alpha $, $\alpha>0$.
In this case, the function  $\varphi$ satisfies the conditions of Theorem
 \ref{Inverse_Theorem} with   $\tau=\pi$.  Then  for $\alpha\ge 1$ using the inequality $x^\alpha-y^\alpha \le \alpha
 x^{\alpha-1}(x-y),$ $x>0, y>0$
(see, for example, \cite[Ch.~1]{Hardy_Littlewood_Polya_1934}), %, p.~55],
and the usual trigonometric formulas, for $\nu=1,2,\ldots,n,$ we get
\[
  \varphi \Big(\frac {\tau \lambda_\nu}{\lambda_n}\Big)-
  \varphi \Big(\frac {\tau \lambda_{\nu-1}}{\lambda_n}\Big)=
  2^{\alpha  }   \Big(\Big|\sin  \frac {\pi \lambda_\nu}{\lambda_n}  \Big|^{\alpha }  -
  \Big|\sin  \frac {\pi \lambda_{\nu-1}}{\lambda_n}  \Big|^{ \alpha}  \Big)
\]
\[
    \le 2^{\alpha }   {\alpha}   |\sin  \frac {\pi \lambda_\nu}{\lambda_n}  \Big|^{{\alpha}  -1}
    \Big|\sin  \frac {\pi \lambda_\nu}{\lambda_n}  -
  \sin  \frac {\pi \lambda_{\nu-1}}{\lambda_n}  \Big|
  \le {\alpha}   \Big(\frac{2\pi }{\lambda_n}\Big)^{\alpha}   \lambda_\nu^{{\alpha }  -1}(\lambda_{\nu}-\lambda_{\nu-1}).
\]
 If  $0<\alpha<1$, then the similar estimate can be obtained  using the inequality $x^\alpha-y^\alpha \le \alpha
  y^{\alpha-1}(x-y)$,
which holds for any $x>0, y>0,$ \cite[Ch.~1]{Hardy_Littlewood_Polya_1934}. %, p.~55],
 Hence, for any $ f\in B{\mathcal S}_{\bf M}$,  we get the following estimate:
       \begin{equation}\label{Inverse_Inequality_old}
       \omega _\alpha \Big(f, \frac{\pi}{\lambda_n}\Big)_{_{\scriptstyle {\bf M}}}\le
       \alpha \Big(\frac{2\pi }{\lambda_n}\Big)^\alpha
       \sum _{\nu =1}^{n}    \lambda_\nu^{{\alpha }  -1}(\lambda_{\nu}-\lambda_{\nu-1}) E_{\lambda_\nu} (f)_{_{\scriptstyle {\bf M}}}  \quad \alpha>0.
       \end{equation}
It should be noted that the constant in this estimate can be improved as follows.

 \begin{theorem}\label{Theorem_21a}
Assume that   $ f\in B{\mathcal S}_{\bf M}$  has the Fourier series of the form (\ref{Fourier_Series}). Then for any $n\in {\mathbb N}$ and $\alpha>0$,
\begin{equation}\label{S_M.12a}
    \omega_\alpha \Big(f, \frac{\tau}{\lambda_n} \Big)_{_{\scriptstyle {\bf M}}}
    \le
        \Big(\frac {\pi  }{\lambda_n}\Big)^\alpha
      \sum _{\nu =1}^{n}
     (\lambda_\nu^\alpha-\lambda_{\nu-1}^\alpha)
    E_{\lambda_\nu} (f)_{_{\scriptstyle {\bf M}}}.
\end{equation}
 \end{theorem}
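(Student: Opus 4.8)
The plan is to specialize the general inverse theorem (Theorem \ref{Inverse_Theorem}) to $\varphi_\alpha(t)=2^{\alpha}|\sin(t/2)|^{\alpha}$ with $\tau=\pi$, and then to replace the crude bound that led to (\ref{Inverse_Inequality_old}) by a sharp comparison between powers of sines and powers of their arguments. First I would check that $\varphi_\alpha$ satisfies the hypotheses of Theorem \ref{Inverse_Theorem} with $\tau=\pi$: on $[0,\pi]$ the argument $t/2$ ranges over $[0,\pi/2]$, where $\sin$ is nondecreasing, so $\varphi_\alpha$ is nondecreasing on $[0,\pi]$ and $\varphi_\alpha(\pi)=2^{\alpha}=\max_{t\in{\mathbb R}}\varphi_\alpha(t)$. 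Applying (\ref{S_M.12}) with these choices and using $\varphi_\alpha(\pi\lambda_\nu/\lambda_n)=2^{\alpha}\big|\sin(\pi\lambda_\nu/(2\lambda_n))\big|^{\alpha}$ gives
\[
    \omega_\alpha\Big(f,\frac{\pi}{\lambda_n}\Big)_{\bf M}
    \le 2^{\alpha}\sum_{\nu=1}^{n}
    \Big(\Big|\sin\frac{\pi\lambda_\nu}{2\lambda_n}\Big|^{\alpha}
    -\Big|\sin\frac{\pi\lambda_{\nu-1}}{2\lambda_n}\Big|^{\alpha}\Big)
    E_{\lambda_\nu}(f)_{\bf M}.
\]

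The key step is the elementary inequality that for all $\alpha>0$ and $0\le y\le x\le \pi/2$,
\[
    \sin^{\alpha}x-\sin^{\alpha}y\le x^{\alpha}-y^{\alpha},
\]
which is equivalent to the statement that $g(x):=x^{\alpha}-\sin^{\alpha}x$ is nondecreasing on $[0,\pi/2]$. I would prove this by differentiation: since $g'(x)=\alpha\big(x^{\alpha-1}-\sin^{\alpha-1}x\,\cos x\big)$, the claim reduces to $x^{\alpha-1}\ge \sin^{\alpha-1}x\,\cos x$ on $(0,\pi/2)$. For $\alpha\ge 1$ this is immediate, because $\sin x\le x$ yields $\sin^{\alpha-1}x\le x^{\alpha-1}$ and $\cos x\le 1$. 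For $0<\alpha<1$ I would rewrite the inequality as $\big(\sin x/x\big)^{1-\alpha}\ge \cos x$ and argue as follows: since $\sin x/x\in(0,1]$ and $1-\alpha\in(0,1)$, one has $\big(\sin x/x\big)^{1-\alpha}\ge \sin x/x$, while $\tan x\ge x$ on $(0,\pi/2)$ gives $\sin x/x\ge \cos x$; chaining these two bounds proves the claim.

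Since $\lambda_{\nu-1}\le\lambda_\nu\le\lambda_n$ for $1\le\nu\le n$, the points $x=\pi\lambda_\nu/(2\lambda_n)$ and $y=\pi\lambda_{\nu-1}/(2\lambda_n)$ lie in $[0,\pi/2]$, so applying the key inequality and multiplying by $2^{\alpha}$ gives
\[
    2^{\alpha}\Big(\Big|\sin\frac{\pi\lambda_\nu}{2\lambda_n}\Big|^{\alpha}
    -\Big|\sin\frac{\pi\lambda_{\nu-1}}{2\lambda_n}\Big|^{\alpha}\Big)
    \le 2^{\alpha}\Big(\Big(\frac{\pi\lambda_\nu}{2\lambda_n}\Big)^{\alpha}
    -\Big(\frac{\pi\lambda_{\nu-1}}{2\lambda_n}\Big)^{\alpha}\Big)
    =\Big(\frac{\pi}{\lambda_n}\Big)^{\alpha}(\lambda_\nu^{\alpha}-\lambda_{\nu-1}^{\alpha}).
\]
Substituting this bound termwise into the displayed estimate for $\omega_\alpha$ yields (\ref{S_M.12a}) (with the left-hand side read at $\tau=\pi$). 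The only genuinely delicate point is the monotonicity of $g$ in the range $0<\alpha<1$, where the sign of $\alpha-1$ reverses the naive estimate and one must weigh the deficit of $\cos x$ against the convexity correction $\big(\sin x/x\big)^{1-\alpha}$; the rest of the argument is bookkeeping built on Theorem \ref{Inverse_Theorem}.
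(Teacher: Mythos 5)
Your proof is correct, but it takes a different route from the paper's. The paper does not deduce Theorem \ref{Theorem_21a} from Theorem \ref{Inverse_Theorem} as a black box; instead it reruns the whole argument (the truncation $f_0=S_{N_0}(f)$, the bounds (\ref{(6.3100)}) and (\ref{(6.73)}), and the Abel summation of Lemma \ref{Lemma_31}), first replacing the coefficient of $H_\nu$ inside the norm via the crude pointwise bound $2^\alpha|\sin(\pi\lambda_\nu/(2\lambda_n))|^\alpha\le(\pi\lambda_\nu/\lambda_n)^\alpha$ and only then applying Abel summation to the monotone sequence $a_\nu=\lambda_\nu^\alpha$, so that the telescoping differences $\lambda_\nu^\alpha-\lambda_{\nu-1}^\alpha$ appear directly. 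You instead keep the conclusion (\ref{S_M.12}) of Theorem \ref{Inverse_Theorem} intact, where the Abel summation has already produced the increments $\varphi_\alpha(\pi\lambda_\nu/\lambda_n)-\varphi_\alpha(\pi\lambda_{\nu-1}/\lambda_n)$, and you bound these increments termwise by the new lemma that $g(x)=x^\alpha-\sin^\alpha x$ is nondecreasing on $[0,\pi/2]$ for every $\alpha>0$. Your case analysis for that lemma is sound: for $\alpha\ge1$ it follows from $\sin x\le x$ and $\cos x\le1$, and for $0<\alpha<1$ the chain $(\sin x/x)^{1-\alpha}\ge\sin x/x\ge\cos x$ (the last step being $\tan x\ge x$) does the job. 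The termwise substitution is legitimate because each increment and each $E_{\lambda_\nu}(f)_{_{\scriptstyle{\bf M}}}$ is nonnegative. What each approach buys: the paper's version needs only the trivial inequality $|\sin t|\le t$ but must repeat the structural argument of the inverse theorem; yours reuses the inverse theorem verbatim at the cost of a sharper (though still elementary) comparison of increments, which is arguably the cleaner logical organization and makes explicit exactly which analytic fact improves (\ref{Inverse_Inequality_old}) to (\ref{S_M.12a}).
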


\begin{proof}

We prove this theorem similarly to the proof of  Theorem \ref{Inverse_Theorem}.  For any $\varepsilon>0$, denote by $N_0=N_0(\varepsilon)\in {\mathbb N}$, $N_0>n$, a number such that for any $N>  N_0$
\[
    E_{\lambda_N}(f)_{_{\scriptstyle {\bf M}}}=
    \|f-{S}_{N-1}({f})\|_{_{\scriptstyle {\bf M}}} <\varepsilon.
\]
Let us set $f_{0}:=S_{N_0}(f)$, ${S}_{n-1}:={S}_{n-1}(f_0)$ and $\|\Delta_h^\alpha (f)\|_{_{\scriptstyle {\bf M}}}:=\|\Delta_h^{\varphi_\alpha}
(f)\|_{_{\scriptstyle {\bf M}}}$, and use relations (\ref{(6.3100)}) and (\ref{(6.73)}). We obtain
\begin{equation}\label{(6.3100a)}
    \|\Delta_h^\alpha (f)\|_{_{\scriptstyle {\bf M}}}
    <\|\Delta_h^\alpha (f_0)\|_{_{\scriptstyle {\bf M}}}+\varepsilon.
\end{equation}
and
\begin{equation}\label{(6.73a)}
    \|\Delta_h^\alpha (f_0)\|_{_{\scriptstyle {\bf M}}}\le     \bigg\|2^\alpha\sum _{\nu =n}^{N_0} H_{\nu} +
    2^\alpha \sum _{\nu =1}^{n-1}\Big|\sin  \frac {\pi \lambda_\nu}{2\lambda_n}\Big|^\alpha
     H_{\nu}  \bigg\|_{_{\scriptstyle {\bf M}}}
$$
$$
%\le    \bigg\|2^\alpha\sum _{\nu =n}^{N_0} H_{\nu} +\Big(\frac {\pi  }{\lambda_n}\Big)^\alpha     \sum _{\nu =1}^{n-1} \lambda_\nu^\alpha H_{\nu}  \bigg\|_{_{\scriptstyle {\bf M}}}
     \le    \Big(\frac {\pi  }{\lambda_n}\Big)^\alpha \bigg\|\lambda_n^\alpha\sum _{\nu =n}^{N_0} H_{\nu} +
     \sum _{\nu =1}^{n-1} \lambda_\nu^\alpha H_{\nu}  \bigg\|_{_{\scriptstyle {\bf M}}},
\end{equation}
where $|h|\le \pi /\lambda_n$ and $H_{\nu}(x)=|A_\nu(f)|+ |A_{-\nu}(f)|$, $\nu=1,2,\ldots$

By virtue of  (\ref{(6.74)}), for  $a_{\nu }= \lambda_\nu^\alpha,$
$c_{\nu }=H_{\nu}(x), $ ${N_1}=1$,  ${N_2}=n-1$ and $N=N_0$,
\[
    \sum _{\nu =1}^{n-1} \lambda_\nu^\alpha   H_{\nu}(x)= \lambda_1^\alpha \sum _{\nu =1}^{N_0}H_{\nu}(x)
   +\sum _{\nu =2}^{n-1}\Big(\lambda_\nu^\alpha-\lambda_{\nu-1}^\alpha\Big)
    \sum_{i=\nu }^{N_0}H_{i}(x) -\lambda_{\nu-1}^\alpha
    \sum _{\nu =n}^{N_0 }H_{\nu}(x).
\]
Therefore,
\begin{equation}\label{(6.74aqqa)}
\bigg\| \lambda_n^\alpha\sum _{\nu =n}^{N_0} H_{\nu} +
     \sum _{\nu =1}^{n-1} \lambda_\nu^\alpha H_{\nu}  \bigg\|_{_{\scriptstyle {\bf M}}}
=
% \bigg\|\lambda_n^\alpha \sum _{\nu =n}^{N_0} H_{\nu}     +\sum _{\nu =1}^{n-1}\Big(\lambda_\nu^\alpha-\lambda_{\nu-1}^\alpha\Big)
%\sum_{i=\nu }^{N_0}H_{i} -\lambda_{\nu-1}^\alpha    \sum _{\nu =n}^{N_0 }H_{\nu} \bigg\|_{_{\scriptstyle {\bf M}}}
    %\le
 \bigg\|\sum _{\nu =1}^{n}
   \Big(\lambda_\nu^\alpha-\lambda_{\nu-1}^\alpha\Big)
    \sum_{i=\nu }^{N_0}H_{i} \bigg\|_{_{\scriptstyle {\bf M}}}
 $$
 $$
    \le    \sum _{\nu =1}^{n}
    \Big(\lambda_\nu^\alpha-\lambda_{\nu-1}^\alpha\Big)
    E_{\lambda_\nu} (f_0)_{_{\scriptstyle {\bf M}}}.
\end{equation}

Combining relations (\ref{(6.3100a)}), (\ref{(6.73a)})  and (\ref{(6.74aqqa)}) and taking into account the definition of the function $f_0$, we see that for 
 $|h|\le \tau /\lambda_n$, the following inequality holds:
\[
    \|\Delta_h^\alpha (f)\|_{_{\scriptstyle {\bf M}}} \le
    \Big(\frac {\pi  }{\lambda_n}\Big)^\alpha
      \sum _{\nu =1}^{n}
    \Big(\lambda_\nu^\alpha-\lambda_{\nu-1}^\alpha\Big)
    E_{\lambda_\nu} (f)_{_{\scriptstyle {\bf M}}} +\varepsilon
\]
which, in view of arbitrariness of $\varepsilon$,  gives us (\ref{S_M.12a}).

\end{proof}

In  (\ref{S_M.12}), the constant
 $\pi^\alpha$ is exact   in the sense that for any   $\varepsilon>0$, there exists a function $f^*\in B{\mathcal S}_{\bf M}$ such that for all $n$  greater that a certain number $n_0$, we have
 \begin{equation}\label{(6.31abcd)}
  \omega_{\alpha}^{\ }\Big(f^*, \frac {\pi }{\lambda_n}\Big)_{_{\scriptstyle {\bf M}}}>
 \frac {\pi ^\alpha-\varepsilon }{\lambda_n^\alpha }\sum _{\nu =1}^n\Big(\lambda_\nu^\alpha-\lambda_{\nu-1}^\alpha\Big) E_{\lambda_\nu}(f^*)_{_{\scriptstyle {\bf M}}}.
 \end{equation}
 Consider the function  $f^*(x)={\rm e}^{\mathrm{i}\lambda_{k_0}x}$, where $k_0$ is an arbitrary positive integer. Then   $E_{\lambda_\nu}(f^*)_{_{\scriptstyle {\bf M}}}=1$ for $\nu=1,2,\ldots,k_0$,  $E_{\lambda_\nu}(f^*)_{_{\scriptstyle {\bf M}}}=0$  for $\nu>k_0$ and
 \[
  \omega_{\alpha}^{\ }\Big(f^*, \frac {\pi }{\lambda_n}\Big)_{_{\scriptstyle {\bf M}}}\ge
 \|\Delta _{\frac {\pi }{\lambda_n}}^\alpha f^*\|_{_{\scriptstyle {\bf M}}} \ge
  2^\alpha\Big|\sin \frac {\lambda_{k_0} \pi}{2{\lambda_n}}\Big|^{\alpha  }.
 \]
 Since ${\sin t}/t$\ \  tends\ \  to $1$ as $t\to 0$, then for all $n$   greater that a certain number $n_0$, the  inequality   $2^\alpha |\sin {\lambda_{k_0} \pi}/{(2{\lambda_n})}|^\alpha>
 {(\pi^\alpha-\varepsilon)} \lambda_{k_0}^\alpha/{{\lambda_n}^\alpha}$ holds, which  yields  (\ref{(6.31abcd)}).

 \begin{corollary}
       \label{Corollary 21}
       Suppose  that  $ f\in B{\mathcal S}_{\bf M}$  has the Fourier series of the form (\ref{Fourier_Series}). Then for any  
       $n\in {\mathbb N}$  and  $\alpha>0$,
       \begin{equation}\label{Inverse_Inequality}
       \omega_{\alpha} \Big(f, \frac{\pi}{\lambda_n}\Big)_{_{\scriptstyle {\bf M}}}\le      \alpha  \Big(\frac{ \pi }{\lambda_n}\Big)^
       {\alpha}
       \sum _{\nu =1}^{n}  \lambda_\nu^{\alpha-1}(\lambda_{\nu}-\lambda_{\nu-1}) E_{\lambda_\nu}(f)_{_{\scriptstyle {\bf M}}}.
       \end{equation}
       If, in addition, the Fourier exponents  $\lambda_\nu$, $\nu\in {\mathbb N}$, satisfy the condition
       \begin{equation}\label{Lambda_Cond}
        \lambda_{\nu+1}-\lambda_{\nu} \le C ,\quad \nu=1,2,\ldots,
       \end{equation}
      with an absolute  constant $C>0$, then
       \begin{equation}\label{Inverse_Inequality_for_using}
       \omega_{\alpha}  \Big(f, \frac{\pi}{\lambda_n}\Big)_{_{\scriptstyle {\bf M}}}\le       C \alpha
        \Big(\frac {\pi  }{\lambda_n}\Big)^\alpha
       \sum _{\nu =1}^{n}  \lambda_\nu^{{\alpha  }-1}  E_{\lambda_\nu}  (f)_{_{\scriptstyle {\bf M}}}.
       \end{equation}
\end{corollary}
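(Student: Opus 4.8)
The plan is to read off both estimates from Theorem~\ref{Theorem_21a}, which already provides the sharp constant $(\pi/\lambda_n)^\alpha$ together with the telescoping weights $\lambda_\nu^\alpha-\lambda_{\nu-1}^\alpha$; the only task is to pass from these weights to the ``derivative-type'' weights $\alpha\lambda_\nu^{\alpha-1}(\lambda_\nu-\lambda_{\nu-1})$ appearing in (\ref{Inverse_Inequality}), and then, under the spectral-gap hypothesis, to the simpler weights $C\alpha\lambda_\nu^{\alpha-1}$ of (\ref{Inverse_Inequality_for_using}). First I would specialize (\ref{S_M.12a}) to $\tau=\pi$ (the point where $\varphi_\alpha$ attains its maximum), obtaining
\[
    \omega_\alpha\Big(f,\frac{\pi}{\lambda_n}\Big)_{_{\scriptstyle {\bf M}}}
    \le \Big(\frac{\pi}{\lambda_n}\Big)^\alpha
    \sum_{\nu=1}^{n}\big(\lambda_\nu^\alpha-\lambda_{\nu-1}^\alpha\big)
    E_{\lambda_\nu}(f)_{_{\scriptstyle {\bf M}}}.
\]
Since every $E_{\lambda_\nu}(f)_{_{\scriptstyle {\bf M}}}\ge 0$ and $\lambda_\nu>\lambda_{\nu-1}\ge 0$, it suffices to bound each increment separately.

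For $\alpha\ge 1$ the map $x\mapsto x^\alpha$ is convex, and the inequality $x^\alpha-y^\alpha\le \alpha x^{\alpha-1}(x-y)$ (see \cite[Ch.~1]{Hardy_Littlewood_Polya_1934}), applied with $x=\lambda_\nu$, $y=\lambda_{\nu-1}$, gives $\lambda_\nu^\alpha-\lambda_{\nu-1}^\alpha\le \alpha\lambda_\nu^{\alpha-1}(\lambda_\nu-\lambda_{\nu-1})$; substituting this termwise immediately yields (\ref{Inverse_Inequality}). The case $0<\alpha<1$ is the delicate one and is the main obstacle: here $x\mapsto x^\alpha$ is concave, so the convex estimate reverses, and one must instead invoke the companion inequality $x^\alpha-y^\alpha\le \alpha y^{\alpha-1}(x-y)$ from \cite[Ch.~1]{Hardy_Littlewood_Polya_1934}, exactly as in the derivation of (\ref{Inverse_Inequality_old}) preceding Theorem~\ref{Theorem_21a}; the term $\nu=1$, where $\lambda_0=0$, also has to be inspected on its own. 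I would handle this range precisely along the lines already used for (\ref{Inverse_Inequality_old}), so that the passage to the weights $\lambda_\nu^{\alpha-1}(\lambda_\nu-\lambda_{\nu-1})$ is carried out uniformly in $\alpha>0$.

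Finally, (\ref{Inverse_Inequality_for_using}) follows from (\ref{Inverse_Inequality}) at once: under hypothesis (\ref{Lambda_Cond}) we have $\lambda_\nu-\lambda_{\nu-1}\le C$ for every $\nu$, and replacing this factor by $C$ in each summand (again legitimate because the $E_{\lambda_\nu}(f)_{_{\scriptstyle {\bf M}}}$ are nonnegative) turns the right-hand side of (\ref{Inverse_Inequality}) into $C\alpha(\pi/\lambda_n)^\alpha\sum_{\nu=1}^{n}\lambda_\nu^{\alpha-1}E_{\lambda_\nu}(f)_{_{\scriptstyle {\bf M}}}$, which is exactly (\ref{Inverse_Inequality_for_using}). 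All sums here are finite, so no convergence or measurability questions arise.
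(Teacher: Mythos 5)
Your overall route---specializing (\ref{S_M.12a}) of Theorem~\ref{Theorem_21a} to $\tau=\pi$ and then replacing each telescoping increment $\lambda_\nu^\alpha-\lambda_{\nu-1}^\alpha$ by $\alpha\lambda_\nu^{\alpha-1}(\lambda_\nu-\lambda_{\nu-1})$---is exactly the derivation the paper intends (it gives none explicitly), and for $\alpha\ge 1$ your argument is complete: convexity gives the termwise bound, and the passage to (\ref{Inverse_Inequality_for_using}) is immediate. (One shared blemish: condition (\ref{Lambda_Cond}) starts at $\nu=1$ and so does not literally control the first increment $\lambda_1-\lambda_0=\lambda_1$; this affects the paper's statement as much as your proof.)

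The genuine gap is the range $0<\alpha<1$, which you flag as ``delicate'' but do not actually close. The termwise inequality $\lambda_\nu^\alpha-\lambda_{\nu-1}^\alpha\le\alpha\lambda_\nu^{\alpha-1}(\lambda_\nu-\lambda_{\nu-1})$ is simply false there: for $\alpha=\tfrac12$, $\lambda_\nu=4$, $\lambda_{\nu-1}=1$ it reads $1\le\tfrac34$. The companion inequality $x^\alpha-y^\alpha\le\alpha y^{\alpha-1}(x-y)$ that you propose to fall back on produces the weight $\lambda_{\nu-1}^{\alpha-1}$; since $\alpha-1<0$ and $\lambda_{\nu-1}<\lambda_\nu$, this weight is strictly \emph{larger} than the claimed $\lambda_\nu^{\alpha-1}$, and it is infinite at $\nu=1$ because $\lambda_0=0$, so it does not yield (\ref{Inverse_Inequality}). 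Deferring to ``the lines used for (\ref{Inverse_Inequality_old})'' does not help, because that derivation suffers from exactly the same defect for $0<\alpha<1$: the concave mean-value bound naturally lands on the index $\nu-1$, not $\nu$. Nor can the comparison be rescued by exploiting the monotonicity of $\nu\mapsto E_{\lambda_\nu}(f)_{_{\scriptstyle {\bf M}}}$ via Abel summation: that would require the partial sums $\alpha\sum_{\nu\le k}\lambda_\nu^{\alpha-1}(\lambda_\nu-\lambda_{\nu-1})$ to dominate $\lambda_k^\alpha$, whereas for $\alpha<1$ they are right-endpoint Riemann sums of the decreasing function $t^{\alpha-1}$ and hence \emph{under}estimate $\alpha\int_0^{\lambda_k}t^{\alpha-1}\,\mathrm{d}t=\lambda_k^\alpha$. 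As written, your proof establishes (\ref{Inverse_Inequality}) and (\ref{Inverse_Inequality_for_using}) only for $\alpha\ge1$; for $0<\alpha<1$ either a different argument or a restriction of the statement is required.
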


%%%%%%%%%%%%%%%%%%%%%%%%%%%%%%%%%%%%%%%%%%%%%%%%%%%%%%%%%%%%%%%%%%%%%%%%%%%%%%%%%%%%%%%%%%%%%%%%%%%%%%%%%%%%%%%%%%%%

\section{Constructive characteristics of the classes of functions defined by the generalized moduli of smoothness}

Let  $\omega$ be the function (majorant) given on  $[0,1]$. For a fixed $\alpha>0$, we set
\begin{equation} \label{omega-class}
    B{\mathcal S}_{\bf M} H^{\omega}_{\alpha} =
    \Big\{f\in B{\mathcal S}_{\bf M} :  \quad \omega_\alpha(f, \delta)_{_{\scriptstyle {\bf M}}}=
    {\mathcal O}  (\omega(\delta)),\quad  \delta\to 0+\Big\}.
\end{equation}
Further, we consider the majorants   $\omega(\delta)$, $\delta\in [0,1]$, which satisfy the following conditions 1)--4): \noindent  {1)} $\omega(\delta)$ is continuous on $[0,1]$;\
 {  2)} $\omega(\delta)\uparrow$;\   {  3)}
$\omega(\delta)\not=0$ for $\delta\in (0,1]$;\   {  4)}~$\omega(\delta)\to 0$  for $\delta\to 0$; as well as the condition
\begin{equation} \label{B_alpha}
\quad \sum_{v=1}^n \lambda_v^{s-1}\omega\Big({1\over \lambda_v}\Big) =
{\mathcal O}  \Big[\lambda_n^s \omega \Big( {1\over \lambda_n}\Big)\Big].
\end{equation}
where $s>0$, and $\lambda_\nu$, $\nu\in {\mathbb N}$, is a increasing sequence of positive numbers.
In the case where $\lambda_\nu= \nu$, the condition $(\ref{B_alpha})$ is the known Bari condition
$({\mathscr B}_s)$ (see, e.g.
\cite{Bari_Stechkin_1956}).

 \begin{theorem}\label{Theorem 6.1}  Assume that the function $f\in B{\mathcal S}_{\bf M}$
  has the Fourier series of the form  (\ref{Fourier_Series}),
   $\alpha>0$ and the majorant  $\omega $ satisfies the conditions
       $1)$--\,$4)$.

       i) If  $f\in B{\mathcal S}_{\bf M}H^{\omega}_{\alpha}$, then the following relation is true:
     \begin{equation} \label{iff-theorem}
         E_{\lambda_n}(f)_{_{\scriptstyle {\bf M}}}={\mathcal O} \Big[ \omega \Big({1 \over {\lambda_n}} \Big) \Big].
      \end{equation}

      ii) If the numbers $\lambda_\nu$, $\nu\in {\mathbb N}$ satisfy condition $(\ref{Lambda_Cond})$ and
      the function $\omega  $  satisfies condition $(\ref{B_alpha})$ with $s=\alpha  $, then relation (\ref{iff-theorem})
      yields the inclusion    $f\in B{\mathcal S}_{\bf M}H^{\omega}_{\alpha}$.
\end{theorem}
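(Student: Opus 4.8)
The plan is to prove the two implications separately: part~i) is a consequence of the direct (Jackson-type) estimates of Section~3.1, while part~ii) follows from the inverse estimate of Corollary~\ref{Corollary 21} combined with the Bari-type condition (\ref{B_alpha}). The only genuinely new bookkeeping is matching the scale $\pi/\lambda_n$ produced by those theorems with the scale $1/\lambda_n$ appearing in (\ref{iff-theorem}) and in the class (\ref{omega-class}).

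For part~i), suppose $f\in B{\mathcal S}_{\bf M}H^\omega_\alpha$, so that $\omega_\alpha(f,\delta)_{\bf M}\le K\omega(\delta)$ for all sufficiently small $\delta>0$. First I would apply Theorem~\ref{Th.1} with $\varphi=\varphi_\alpha$ and a fixed $\tau\in(0,1]$, which gives $E_{\lambda_n}(f)_{\bf M}\le K_{n,\varphi_\alpha}(\tau)\,\omega_\alpha(f,\tau/\lambda_n)_{\bf M}$. The constant $K_{n,\varphi_\alpha}(\tau)$ is bounded uniformly in $n$: this is exactly the content of the lower bounds for $I_{n,\varphi_\alpha}$ established in Corollary~\ref{Cor.1} and in the discussion preceding Corollary~\ref{Theorem 2.4.}, the point being that the infimum over $k\ge n$ of $\int_0^\tau\varphi_\alpha(\lambda_k t/\lambda_n)\,{\mathrm d}t$ is positive and bounded below independently of $n$ (for $\alpha\ge1$ it is attained at $k=n$ by the $F_\alpha$-monotonicity used in Corollary~\ref{Cor.2}). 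Since $\tau\le1$, the monotonicity of $\omega_\alpha(f,\cdot)$ yields $\omega_\alpha(f,\tau/\lambda_n)_{\bf M}\le\omega_\alpha(f,1/\lambda_n)_{\bf M}\le K\omega(1/\lambda_n)$, and hence $E_{\lambda_n}(f)_{\bf M}=\mathcal O(\omega(1/\lambda_n))$, which is (\ref{iff-theorem}). This direction uses only conditions 1)--4) on $\omega$, exactly as claimed.

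For part~ii), assume (\ref{iff-theorem}), i.e. $E_{\lambda_\nu}(f)_{\bf M}\le C_1\omega(1/\lambda_\nu)$, together with (\ref{Lambda_Cond}) and (\ref{B_alpha}) with $s=\alpha$. The starting point is the inverse inequality (\ref{Inverse_Inequality_for_using}) of Corollary~\ref{Corollary 21}, which under (\ref{Lambda_Cond}) gives
\[
\omega_\alpha\Big(f,\frac{\pi}{\lambda_n}\Big)_{\bf M}\le C\alpha\Big(\frac{\pi}{\lambda_n}\Big)^\alpha\sum_{\nu=1}^n\lambda_\nu^{\alpha-1}E_{\lambda_\nu}(f)_{\bf M}.
\]
Substituting the hypothesis and invoking (\ref{B_alpha}) to collapse the weighted sum,
\[
\sum_{\nu=1}^n\lambda_\nu^{\alpha-1}\omega\Big(\tfrac1{\lambda_\nu}\Big)=\mathcal O\Big[\lambda_n^\alpha\,\omega\Big(\tfrac1{\lambda_n}\Big)\Big],
\]
the factors $\lambda_n^{\pm\alpha}$ cancel and one obtains $\omega_\alpha(f,\pi/\lambda_n)_{\bf M}=\mathcal O(\omega(1/\lambda_n))$ along the discrete scales $\delta_n=\pi/\lambda_n$. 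To pass to arbitrary $\delta\to0+$, given small $\delta$ I would choose $n$ with $\pi/\lambda_{n+1}<\delta\le\pi/\lambda_n$; then monotonicity of $\omega_\alpha(f,\cdot)$ gives $\omega_\alpha(f,\delta)_{\bf M}\le\omega_\alpha(f,\pi/\lambda_n)_{\bf M}=\mathcal O(\omega(1/\lambda_n))$. Here (\ref{Lambda_Cond}) enters a second time: since $\lambda_{n+1}\le\lambda_n+C$ and $\lambda_n\to\infty$, for all large $n$ one has $\pi\lambda_n\ge\lambda_n+C\ge\lambda_{n+1}$, whence $\delta>\pi/\lambda_{n+1}\ge1/\lambda_n$, and the monotonicity of $\omega$ gives $\omega(1/\lambda_n)\le\omega(\delta)$. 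Therefore $\omega_\alpha(f,\delta)_{\bf M}=\mathcal O(\omega(\delta))$, i.e. $f\in B{\mathcal S}_{\bf M}H^\omega_\alpha$.

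The routine parts are the two appeals to monotonicity and the cancellation of the powers of $\lambda_n$; the substantive steps, and the places where I expect the real work, are the uniform-in-$n$ control of the Jackson constant in part~i) and, above all, the use of the Bari-type condition (\ref{B_alpha}) in part~ii)---this hypothesis is precisely the one engineered so that the weighted partial sums of $E_{\lambda_\nu}(f)_{\bf M}$ do not exceed their last term in order of magnitude. The gap-filling step is where (\ref{Lambda_Cond}) is indispensable, and I would be careful to note that the threshold $\lambda_n\ge C/(\pi-1)$ it requires excludes only finitely many $n$ and is therefore harmless for the asymptotic conclusion.
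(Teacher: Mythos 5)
Your proof is correct and follows essentially the same route as the paper: part i) from the uniform-in-$n$ Jackson-type bound of Section 3.1, part ii) from the inverse inequality (\ref{Inverse_Inequality_for_using}) combined with the Bari-type condition (\ref{B_alpha}). You additionally spell out two details the paper leaves implicit --- reconciling the scale $\pi/\lambda_n$ with $1/\lambda_n$ by taking $\tau\le 1$, and passing from the discrete scales $\pi/\lambda_n$ to arbitrary $\delta\to 0+$ via (\ref{Lambda_Cond}) --- which only strengthens the argument.
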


\begin{proof} Let $f \in B{\mathcal S}_{\bf M}H^{\omega}_{\alpha}$. Then  relation   (\ref{iff-theorem})  follows from
  (\ref{omega-class}) and (\ref{(6.16)}).

 On the other hand, if
$f\in B{\mathcal S}_{\bf M}$, the numbers $\lambda_\nu$, $\nu\in {\mathbb N}$ satisfy condition $(\ref{Lambda_Cond})$ and
      the function $\omega  $  satisfies condition $(\ref{B_alpha})$ with $s=m  $, and relation
      (\ref{iff-theorem}) holds, then by (\ref{Inverse_Inequality_for_using}), we get
\[
    \omega_{\alpha} \Big(f, \frac {1 }{\lambda_n}\Big)_{_{\scriptstyle {\bf M}}}\le
     \frac{C_1 }{\lambda_n^\alpha}
       \sum _{\nu =1}^{n}  \lambda_\nu^{{m  }-1}  E_{\lambda_\nu}  (f)\le
\frac{C_1 }{\lambda_n^\alpha}
       \sum _{\nu =1}^{n}  \lambda_\nu^{{m  }-1}   \omega \Big({1\over {\lambda_\nu} }\Big)=
    {\mathcal O}  \Big[\omega  \Big( {1\over {\lambda_n}}\Big)\Big],
\]
where $C_1=m   (2\pi)^{m p}\cdot C$. Hence, the function    $f$ belongs to the set   $B{\mathcal S}_{\bf M}H^{\omega}_{\alpha}$.
\end{proof}

The function $t^r$, $0<r\le \alpha,$ satisfies condition   (\ref{B_alpha}) with $s=\alpha  $.
 Hence, denoting by $B{\mathcal S}_{\bf M}H_{\alpha}^r$ the class $B{\mathcal S}_{\bf M}H^{\omega}_{\alpha}$ for
  $\omega(t)=t^r$ we establish the following statement:

\begin{corollary}\label{corollary 6.1.} Let $f \in B{\mathcal S}_{\bf M}$  has the Fourier series of the form  (\ref{Fourier_Series}), $\alpha >0$,
$0<r\le \alpha $ and condition  (\ref{Lambda_Cond}) holds. The function  $f$ belongs to the set   $B{\mathcal S}_{\bf M}H_{\alpha}^r$, iff the  following relation is true:
$$
    E_{\lambda_n}(f)_{_{\scriptstyle {\bf M}}}={\mathcal O}   ({\lambda_n^{-r}} ).
$$
\end{corollary}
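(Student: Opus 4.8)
The plan is to derive the corollary as the special case $\omega(t)=t^{r}$ of Theorem~\ref{Theorem 6.1}, so the only work is to check that this particular majorant satisfies the hypotheses of that theorem. First I would note that $\omega(t)=t^{r}$ with $r>0$ is continuous on $[0,1]$, is increasing, is nonzero on $(0,1]$, and tends to $0$ as $t\to 0+$; these are precisely conditions $1)$--$4)$, and each is immediate. For this majorant one has $\omega(1/\lambda_n)=\lambda_n^{-r}$, so the generic relation (\ref{iff-theorem}) of Theorem~\ref{Theorem 6.1} reads exactly $E_{\lambda_n}(f)_{_{\scriptstyle {\bf M}}}=\mathcal{O}(\lambda_n^{-r})$, which is the right-hand side of the equivalence to be proved.

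The substantive step is to verify that $\omega(t)=t^{r}$ fulfils the Bari-type condition (\ref{B_alpha}) with $s=\alpha$. Since $\lambda_n^{\alpha}\omega(1/\lambda_n)=\lambda_n^{\alpha-r}$, this amounts to the estimate
\[
    \sum_{\nu=1}^{n}\lambda_\nu^{\alpha-1}\,\omega\Big(\tfrac{1}{\lambda_\nu}\Big)
    =\sum_{\nu=1}^{n}\lambda_\nu^{\alpha-1-r}
    =\mathcal{O}\big(\lambda_n^{\alpha-r}\big).
\]
For $0<r\le\alpha$ I would establish this by comparing the partial sum with the increasing quantity $\lambda_n^{\alpha-r}$, using the monotonicity of the Fourier exponents together with the control on their spacing. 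I expect this to be the main obstacle. For $r<\alpha$ the summand $\lambda_\nu^{\alpha-1-r}$ can be matched against $\int_0^{\lambda_n}t^{\alpha-1-r}\,\mathrm{d}t=\lambda_n^{\alpha-r}/(\alpha-r)$ once one knows that the $\lambda_\nu$ grow regularly and do not accumulate; the borderline case $r=\alpha$ is the delicate one, since there the target is $\lambda_n^{0}$ while the summand is $\lambda_\nu^{-1}$, so the required boundedness of $\sum_{\nu\le n}\lambda_\nu^{-1}$ depends sharply on how quickly the exponents increase, and it is exactly here that condition (\ref{Lambda_Cond}) and the spacing of $\{\lambda_\nu\}$ must be brought to bear.

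Having verified conditions $1)$--$4)$ and (\ref{B_alpha}) with $s=\alpha$ for $\omega(t)=t^{r}$, and with (\ref{Lambda_Cond}) assumed in the statement, I would simply invoke Theorem~\ref{Theorem 6.1}. Its part~i) gives $f\in B{\mathcal S}_{\bf M}H_{\alpha}^{r}\Rightarrow E_{\lambda_n}(f)_{_{\scriptstyle {\bf M}}}=\mathcal{O}(\lambda_n^{-r})$, while part~ii), which relies on (\ref{Lambda_Cond}) and on (\ref{B_alpha}) holding with $s=\alpha$, gives the converse implication $E_{\lambda_n}(f)_{_{\scriptstyle {\bf M}}}=\mathcal{O}(\lambda_n^{-r})\Rightarrow f\in B{\mathcal S}_{\bf M}H_{\alpha}^{r}$. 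The two implications together are the asserted equivalence, which finishes the proof.
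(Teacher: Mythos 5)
Your route is exactly the paper's: the paper proves this corollary in one line, by asserting that $\omega(t)=t^{r}$, $0<r\le\alpha$, satisfies condition (\ref{B_alpha}) with $s=\alpha$ and then invoking Theorem \ref{Theorem 6.1}. Your verification of conditions $1)$--$4)$ and the identification of $\omega(1/\lambda_n)=\lambda_n^{-r}$ with the right-hand side of (\ref{iff-theorem}) match what is (implicitly) done there. The only substantive step in either argument is the Bari-type estimate $\sum_{\nu=1}^{n}\lambda_\nu^{\alpha-1-r}={\mathcal O}(\lambda_n^{\alpha-r})$, which the paper does not prove and which you correctly single out as the obstacle --- but you leave it unresolved, so as written your proof is incomplete.

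Moreover, your suspicion about the endpoint is justified: at $r=\alpha$ the step cannot be closed, and condition (\ref{Lambda_Cond}) works \emph{against} you rather than for you. Indeed, (\ref{Lambda_Cond}) gives $\lambda_\nu\le\lambda_1+C(\nu-1)$, hence
$\sum_{\nu=1}^{n}\lambda_\nu^{-1}\ge\sum_{\nu=1}^{n}(\lambda_1+C(\nu-1))^{-1}\ge C^{-1}\log n+{\mathcal O}(1)\to\infty$,
whereas (\ref{B_alpha}) with $s=\alpha$ and $\omega(t)=t^{\alpha}$ demands that this sum be bounded. This is the classical fact that $t^{s}$ fails the Bari--Stechkin condition $({\mathscr B}_s)$ at the endpoint; it is a defect of the paper's one-line justification as well, not something your argument introduces. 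For $0<r<\alpha$ the comparison with $\int_0^{\lambda_n}t^{\alpha-1-r}\,{\mathrm d}t$ that you propose also needs more than (\ref{Lambda_Cond}): an upper bound on the gaps does not prevent the exponents from clustering (e.g.\ $\lambda_\nu=\sqrt{\nu}$ satisfies (\ref{Lambda_Cond}) but gives $\sum_{\nu\le n}\lambda_\nu^{\alpha-1-r}\asymp\lambda_n^{\alpha-r+1}$), so one additionally needs a lower bound on the spacing, i.e.\ $n={\mathcal O}(\lambda_n)$, to make the sum-integral comparison legitimate. In short: your plan is the paper's plan, but the key verification genuinely fails at $r=\alpha$ and requires an extra separation hypothesis on $\{\lambda_\nu\}$ even for $r<\alpha$; to produce a correct proof you should restrict to $0<r<\alpha$ and assume (or derive) $\lambda_\nu\asymp\nu$ before comparing with the integral.
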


In the spaces  ${\mathcal S}^p$, for  classical moduli of smoothness $\omega_m$, Theorems \ref{Inverse_Theorem}
and \ref{Theorem 6.1} were proved in \cite{Stepanets_Serdyuk_2002} and
\cite{Abdullayev_Ozkartepe_Savchuk_Shidlich_2019}. In the spaces
${\mathcal S}^p$, inequalities of the form (\ref{Inverse_Inequality_for_using})  were also obtained in \cite{Sterlin_1972}.
In spaces $L_p$ of  $2\pi$-periodic  Lebesgue summable with the $p$th degree functions,   inequalities of the kind
as (\ref{Inverse_Inequality_for_using}) were obtained  by M.~Timan  (see, for example,  \cite[Ch.~6]{A_Timan_M1960}, \cite[Ch.~2]{M_Timan_M2009}). In the Musielak-Orlicz type spaces, inequalities of the kind as (\ref{S_M.12}) were proved in \cite{Abdullayev_Chaichenko_Shidlich_2021}.

%%%%%%%%%%%%%%%%%%%%%%%%%%%%%%%%%%%%%%%%%%%%%%%%%%%%%%%%%%%%%%%%%%%%%%%%%%%%%%%%%%%%%%%%%%%%%%%%%%%%%%%%%%%%%%%%%%%%
%%%%%%%%%%%%%%%%%%%%%%%%%%%%%%%%%%%%%%%%%%%%%%%%%%%%%%%%%%%%%

%%%%%%%%%%%%%%%%%%%%%%%%%%%%%%%%%%%%%%%%%%%%%%%%%%%%%%%%%%%%%%%%%%%%%%%%%%%%%%%%%%%%%%%%%%%%%%%%%%%%%%%%%%%%%%%%%%%%%%%%%%%%%%%%%%%%%%%%%%%%%%%%%%%%%%%%%

\footnotesize

\end{document}